\titleformat{\subsection}[runin]
{\bfseries} {\thesubsection{.}}{0.15cm}{}[.]
\titleformat{\subsubsection}[runin]
{\em}{\thesubsubsection{.}}{0.15cm}{}[.]
\newtheorem{theorem}{Theorem}[section]
\newtheorem{lemma}[theorem]{Lemma}
\newtheorem{corollary}[theorem]{Corollary}
\theoremstyle{definition}
\newtheorem{definition}[theorem]{Definition}
\newtheorem{remark}[theorem]{Remark}
\numberwithin{equation}{section}
\numberwithin{figure}{section}
\newcommand\Acal{\mathcal{A}}
\newcommand\Ccal{\mathcal{C}}
\newcommand\Ocal{\mathcal{O}}
\newcommand\C{\mathbb{C}}
\renewcommand\S{\mathbb{S}}
\renewcommand\c{\mathbb{C}}
\renewcommand\d{\mathbb D}
\newcommand\n{\mathbb{N}}
\renewcommand\r{\mathbb{R}}
\newcommand\s{\mathbb{S}}
\newcommand\hgot{\mathfrak{h}}
\newcommand\fgot{\mathfrak{f}}
\newcommand\bb{\mathrm{b}}
\begin{document}

%%
%% Headings
%%

\fancyhead[LO]{Harmonic embeddings of open Riemann surfaces}
\fancyhead[RE]{A.\ Alarc\'on and F.J. L\'opez}
\fancyhead[RO,LE]{\thepage}

\thispagestyle{empty}

%%
%% Title
%%

\begin{center}
{\bf\LARGE Proper harmonic embeddings\\ \smallskip of open Riemann surfaces into $\r^4$}

\bigskip

%
% Authors
%
{\large\bf Antonio Alarc\'on\quad and\quad Francisco J. L\'opez}
\end{center}

%
% Abstract, keywords, and MSC
%
\bigskip

\begin{quoting}[leftmargin={7mm}]
{\small
\noindent {\bf Abstract}\hspace*{1mm}
We prove that every open Riemann surface admits a proper embedding into $\r^4$ by harmonic functions. This reduces by one the previously known embedding dimension in this framework, dating back to a theorem by Greene and Wu from 1975.

%\smallskip

\noindent{\bf Keywords}\hspace*{1mm} 
Riemann surface, harmonic function, proper embedding.

%\smallskip

\noindent{\bf Mathematics Subject Classification (2020)}\hspace*{0.1cm} 
30F15, % Harmonic functions on Riemann surfaces
32Q40, %Embedding theorems for complex manifolds
53A05. % Surfaces in Euclidean and related spaces
%53C42, % Differential geometry of immersions (minimal, prescribed curvature, tight, etc.)
}
\end{quoting}

%%%%%%%%%%
%%%%%%%%%%
%%%%%%%%%% Section: Introduction
%%%%%%%%%%
%%%%%%%%%%
%%%%%%%%%%

\section{Introduction and main results}\label{sec:intro}

\noindent 
A real-valued function $h$ on a Riemannian manifold is called {\em harmonic} if it is a critical point of the energy functional; equivalently, $\Delta h=0$ where $\Delta$ is the Laplace operator associated to the Riemannian metric on the manifold. 

Greene and Wu proved in 1975 that every open (connected) Riemannian manifold of dimension $n\ge 2$ admits a proper embedding into $\r^{2n+1}$ by harmonic functions \cite{GreeneWu1975AIF}. 
The proof relies on approximation theory for this kind of functions and a general position argument. It is an open question whether $2n+1$ is the smallest embedding dimension in this framework.
In the case of $n=2$, harmonicity of a function on a Riemannian surface depends only on the conformal class of the Riemannian metric of the surface but not on the precise metric itself. Thus, for orientable surfaces, the result by Greene and Wu can be reformulated as follows:
Every open Riemann surface carries a proper
harmonic embedding into $\r^5$.
In this paper we improve this statement by reducing the embedding dimension by one.
%
% Theorem
%
\begin{theorem}\label{th:intro}
Every open Riemann surface admits a proper harmonic embedding into $\r^4$.
\end{theorem}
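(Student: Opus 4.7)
The plan is to follow the now-standard Runge--Mergelyan induction on a normal exhaustion of $M$, in the spirit of the Greene--Wu construction, but with one genuinely new ingredient: a double-point elimination argument that exploits the flexibility of harmonic maps in the critical embedding dimension four. The first tool to set up is the approximation machinery. On an open Riemann surface harmonic functions are locally real parts of holomorphic functions, and by the classical Runge and Mergelyan theorems (in their harmonic formulation), any harmonic function defined in a neighborhood of a Runge compact set can be uniformly approximated on that set by harmonic functions defined on all of $M$; applied componentwise this gives a Mergelyan theorem for harmonic maps into $\r^4$, which will be the workhorse of the construction.

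Next, fix a normal exhaustion $K_1 \subset K_2 \subset \cdots$ of $M$ by smoothly bounded Runge domains and inductively construct a sequence of harmonic maps $h_j\colon M \to \r^4$ satisfying (i) $\|h_{j+1}-h_j\|_{K_{j-1}} < \epsilon_j$ with $\sum_j \epsilon_j < \infty$, (ii) $h_j|_{K_j}$ is an embedding, and (iii) $h_j(M \setminus K_j) \subset \{\|x\| \ge j\}$. Stability of embeddings under small uniform perturbations, together with (i), guarantees that the limit $h = \lim h_j$ exists, is harmonic, and restricts to an embedding on each $K_j$; (iii) yields properness. The inductive step $h_j \rightsquigarrow h_{j+1}$ decomposes into an extension past $K_j$ by a harmonic function vanishing near $K_{j-1}$ and of large modulus on the shell $K_{j+1}\setminus K_j$ (handled by Runge approximation, securing (iii)), and the enforcement of the embedding condition on the larger set $K_{j+1}$, which is the technical heart of the argument.

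The latter is where the target dimension four bites. The non-immersion locus of a harmonic map $M\to\r^4$ is cut out, in local holomorphic coordinates, by the condition that the four holomorphic derivatives $(\partial h_i/\partial z)_{i=1}^4 \in \c^4$ be proportional up to a real scalar---a real-codimension-$3$ condition easily avoided by a generic perturbation. The double-point locus $\{(p,q) \in M\times M\setminus\Delta : h(p)=h(q)\}$, however, is cut out by $4$ real equations in a $4$-real-dimensional space, so for a generic harmonic map it is $0$-dimensional but nonempty: this is precisely why Greene--Wu needed $\r^5$, where the analogous codimension is $5$ and the locus is generically empty. After a generic perturbation one is therefore left with only finitely many isolated pairs $(p_\alpha, q_\alpha)$ to eliminate in the compact set $K_{j+1}$, and the key move is to choose pairwise disjoint coordinate disks $U_\alpha \ni p_\alpha$, $V_\alpha \ni q_\alpha$ and introduce a finite-dimensional family of harmonic perturbations $\phi_t\colon M\to\r^4$, produced by Runge approximation of local harmonic bumps on $\bigsqcup_\alpha(U_\alpha\cup V_\alpha)$, chosen so that the parameter-derivative at $t=0$ of $t \mapsto (h_j+\phi_t)(p_\alpha)-(h_j+\phi_t)(q_\alpha) \in \r^4$ is surjective independently for each $\alpha$. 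A standard transversality-to-the-diagonal argument then supplies a parameter $t$ for which every old double pair is destroyed and no new ones appear, and a final Mergelyan approximation globalises this perturbation to produce $h_{j+1}$. This double-point elimination is both the main obstacle and the new content beyond the original Greene--Wu argument.
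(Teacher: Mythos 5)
There is a genuine gap, and it sits exactly at the step you call the ``new content'': the double-point elimination. Transverse double points of an immersed surface in $\r^4$ are \emph{stable} under small perturbations --- this is the same codimension count you yourself make (two $2$-dimensional sheets meeting transversally in $\r^4$ intersect in isolated points, and transverse intersections persist under any sufficiently small deformation). Your finite-dimensional family $\phi_t$ and the ``transversality-to-the-diagonal'' argument can only conclude that for generic $t$ the double-point set of $h_j+\phi_t$ on $K_{j+1}$ is again a finite set of transverse double pairs; it cannot make that set empty. Choosing $t$ so that $(h_j+\phi_t)(p_\alpha)\neq(h_j+\phi_t)(q_\alpha)$ for the original pairs merely moves each intersection to a nearby pair $(p_\alpha',q_\alpha')$; the same problem then reappears after the final Mergelyan globalisation, which is again a small perturbation. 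In short, no small-perturbation/general-position argument can remove double points in dimension four --- this is precisely why the paper abandons the Greene--Wu projection scheme altogether. The paper instead starts from a simple \emph{almost proper holomorphic} function $z\colon M\to\c$ and builds a harmonic $h\colon M\to\r^2$ (real part of a holomorphic map) such that $(z,h)$ separates the fibers of $z$, is a local diffeomorphism at the critical points of $z$, and is large on suitable shells; injectivity is maintained at every stage of the induction by a fiberwise ordering/surgery construction (the ``puzzle'' of Lemma \ref{le:fun}), so self-intersections are never created and never need to be removed.

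A secondary, fixable but real, issue is your stability claim in the induction: embeddedness of $h_j|_{K_j}$ is \emph{not} stable under small \emph{uniform} ($C^0$) perturbations, only under $C^1$-small ones, and Runge-type approximation as you invoke it gives uniform bounds. One must either upgrade to $C^1$ estimates on slightly smaller compacts (Cauchy estimates for the holomorphic extensions) or, as the paper does in conditions (a$_i$)--(d$_i$), impose a quantitative injectivity bound of the form $\|h_{i}-h_{i-1}\|_{K_{i-1}}<\delta_i$ with $\delta_i$ proportional to the minimal separation $|(z,h_{i-1})(p)-(z,h_{i-1})(q)|$ over pairs with $\mathrm{dist}(p,q)>1/i$, so that injectivity survives the limit. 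Without such a device the limit map could fail to be injective even if every $h_j$ is an embedding on $K_j$.
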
 
Immersed surfaces in $\r^5$ are generically embedded, while transverse self-intersections (generically, double points) of immersed surfaces in $\r^4$ are stable under small deformations. This is the main reason why reducing the embedding dimension from $5$ to $4$ is a challenging undertaking. Lacking any kind of transversality or general position argument that help to ensure the embeddedness, our construction relies on an inductive procedure which never produces self-intersections, and so we need not get rid of them.  

The method by Greene and Wu \cite{GreeneWu1975AIF} consists first of constructing a proper harmonic map $f\colon X^n\to\r^{n+1}$ from an open $n$-dimensional Riemannian manifold $X^n$, as well as an injective harmonic immersion $g\colon X^n\to\r^{2n+1}$. Then, using a Whitney-type projection technique (see \cite{Whitney1937}), they extract from $(f,g)\colon X^n\to\r^{3n+2}$ a proper harmonic embedding $X^n\to\r^{2n+1}$.
For $n=2$, it has been known only recently that every open Riemann surface $M$ carries a proper harmonic map into $\r^2$; see \cite[Corollary 1.1]{AlarconLopez2012JDG} or \cite[Theorem 5.6]{AndristWold2014AIF}. This gave a complete solution to a question posed by Schoen and Yau in 1985 \cite[p.\ 18]{SchoenYau1997CIP}; see \cite[p.\ 175]{AlarconForstnericLopez2021Book} for further discussion and references. However, starting with a {\em proper} harmonic map $M\to\r^2$ (instead of into $\r^3$) does not lead to any dimensional reduction in the construction by Greene and Wu in \cite{GreeneWu1975AIF}. 
Instead of that, we shall begin with an {\em almost proper} holomorphic function on $M$, being in addition {\em simple} in the sense that its critical points are all of order one and the function is injective on its critical locus.\footnote{A continuous map $f\colon X\to Y$ between topological spaces is said to be {\em proper} if  $f^{-1}(K)$ is compact for every compact set $K\subset Y$, while it is called {\em almost proper} if every connected component of $f^{-1}(K)$ is compact for every such $K$.} Our method is therefore mixed: holomorphic -- harmonic. Besides this, our proof is completely different from that in \cite{GreeneWu1975AIF}. 

Here is our main result.
%
% Theorem 
%
\begin{theorem}\label{th:h}
Let $M$ be an open Riemann surface. For any simple almost proper holomorphic function $z\colon M\to\c$ there is a harmonic map $h\colon M\to\r^2$ such that
$(z,h)\colon M\to \c\times\r^2\equiv \r^4$ 
is a proper embedding. 

Furthermore, the harmonic map $h$ can be chosen to be the real part of a holomorphic map $M\to\c^2$.
\end{theorem}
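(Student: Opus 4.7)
The plan is to construct $g$ as the limit of a Mergelyan-type inductive scheme $g_n\colon M\to\c^2$ adapted to the exhaustion of $M$ supplied by $z$. Almost properness of $z$ guarantees that each connected component of $z^{-1}(\overline{\d(0,R)})$ is compact, and enumerating these components yields a normal Runge exhaustion $\emptyset=K_0\Subset K_1\Subset K_2\Subset\cdots$ by smoothly bounded compact sets, each $K_n$ a finite union of components of $z^{-1}(\overline{\d(0,R_n)})$ for some $R_n\nearrow\infty$, with $M=\bigcup_nK_n$. I would produce $g_n$ together with $\epsilon_n>0$, $\sum\epsilon_n<\infty$, such that
\begin{enumerate}
\item[(i)] $\sup_{K_{n-1}}|g_n-g_{n-1}|<\epsilon_n$;
\item[(ii)] $(z,\mathrm{Re}\,g_n)\colon M\to\c\times\r^2$ is an injective immersion on $K_n$ with a quantitative safety margin greater than $2\sum_{m>n}\epsilon_m$, so that it survives all future perturbations;
\item[(iii)] $|\mathrm{Re}\,g_n|>n+2\sum_{m>n}\epsilon_m$ on $K_n\setminus K_{n-1}$.
\end{enumerate}
By (i) the $g_n$ converge uniformly on compact sets to a holomorphic map $g\colon M\to\c^2$. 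Condition (ii) combined with the margin makes $(z,\mathrm{Re}\,g)$ an injective immersion on each $K_n$, hence on $M$. Condition (iii), together with almost properness of $z$, forces properness: the preimage of $\overline{\d(0,R)}\times\overline{\d(0,S)}\subset\c\times\r^2$ lies in $K_N\cap z^{-1}(\overline{\d(0,R)})$ for any integer $N>S$, a compact subset of $M$.

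For the inductive step, assume $g_{n-1}$ has been constructed. The Runge--Mergelyan theorem for holomorphic maps to $\c^2$ on open Riemann surfaces lets one approximate on $K_{n-1}$, within error $\epsilon_n/2$, any continuous map $K_n\to\c^2$ that is holomorphic on the interior and agrees with $g_{n-1}$ on $K_{n-1}$, by a globally holomorphic map on $M$. This leaves the choice on $K_n\setminus K_{n-1}^\circ$ essentially free. Condition (iii) is then easy: add a large constant on each connected component of $K_n\setminus K_{n-1}^\circ$ before approximating. The delicate point is (ii). Injectivity of $(z,\mathrm{Re}\,g_n)$ on $K_n$ is equivalent to $\mathrm{Re}(g_n(p)-g_n(q))\neq 0$ on the \emph{collision variety}
\[
W_n=\{(p,q)\in K_n\times K_n:p\neq q,\ z(p)=z(q)\},
\]
a compact subset of a complex-analytic subset of $M\times M$ of complex dimension $1$, hence real dimension $2$. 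Since the bad target $\{0\}\subset\r^2$ also has codimension $2$ in $\r^2$, a generic holomorphic perturbation of $g_n$ of the type supplied by the Mergelyan scheme makes the map $(p,q)\mapsto\mathrm{Re}(g_n(p)-g_n(q))$ miss the origin on $W_n$. Simplicity of $z$ enters here: it ensures that $W_n$ has only finitely many irreducible components with controlled singularities over the critical values of $z$, so the required transversality argument is local and finite. Injectivity on $K_{n-1}$ is preserved thanks to the margin carried in (ii) from step $n-1$. The immersion condition at the finitely many critical points of $z$ in $K_n\setminus K_{n-1}$ is an analogous open transversality on $dg_n$, easily arranged.

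The main obstacle, as stressed in the introduction, is the stability of self-intersections of immersed surfaces in $\r^4$: once created they cannot be perturbed away. The inductive design is built to never create them in the first place, but making this avoidance rigorous within the rigid class of holomorphic perturbations is subtle because the obstruction map into $\r^2$ is defined on a variety of the same real dimension as the target --- there is no slack, and the usual Sard-type arguments apply only in a borderline way. The crux of the proof is therefore a careful holomorphic transversality statement for the separation map $(p,q)\mapsto\mathrm{Re}(g_n(p)-g_n(q))$ on $W_n$, together with its extension across the critical locus of $z$ and across collisions involving both previously constructed and newly added sheets of the branched cover $z$.
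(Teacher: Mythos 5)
Your overall skeleton (exhaustion by components of $z^{-1}(r\overline\d)$ using almost properness, Runge--Mergelyan approximation with summable errors and safety margins, a growth condition forcing properness) matches the paper's, but the step that carries the whole theorem is missing. You reduce injectivity of $(z,\Re g_n)$ to making the separation map $(p,q)\mapsto \Re(g_n(p)-g_n(q))$ avoid $0\in\r^2$ on the collision variety $W_n$, and claim a generic holomorphic perturbation achieves this. That claim is false for exactly the dimension reason the paper emphasizes: $W_n$ has real dimension $2$ and the target is $\r^2$, so genericity (Sard/transversality) only gives that $0$ is a regular value, i.e.\ the zero set is a finite set of \emph{transverse, hence stable} zeros --- these are precisely the stable double points of immersed surfaces in $\r^4$, and no small perturbation removes them. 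You acknowledge the argument is ``borderline'' and that the crux is ``a careful holomorphic transversality statement,'' but no such statement exists to be had; the paper explicitly renounces all general position arguments and instead proves a constructive main lemma (Lemma \ref{le:fun}) whose content is what your proposal omits: over the annulus $s\overline\d\setminus r\d$ one builds the two real components in turn, prescribing $h_1$ (via Mergelyan with interpolation) so that it separates the sheets of the simple branched covering $z$ away from an explicit small locus --- near each simple branch point $b$ one uses the square-root coordinate $\psi^b$ with $h_1=\Re(\psi^b)$, so $(z,h_1)$ fails to be injective only along an arc through $b$ --- and then prescribing $h_2$ (locally constant on a $z$-saturated region $Y$, equal to $\Im(\psi^b)$ near $b$, and ordered sheet-by-sheet along the leftover arcs $\hat\varpi^b_\pm$ using a total order of sheets by $h_2$-boundary values) so that $(z,h_2)$ separates exactly where $h_1$ fails. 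The induction is only possible because of the bookkeeping you have no analogue of: at each boundary circle the sets $\hgot_1,\hgot_2$ of fiber points where $h_1$, resp.\ $h_2$, fails to separate are kept finite and \emph{disjoint} (hypothesis (II) and conclusion (iii) of the lemma), which is what lets the next annulus be handled without ever creating a double point. Without this (or an equivalent mechanism) your inductive step does not go through.

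A secondary, fixable issue: your condition (iii) ($|\Re g_n|>n+\cdots$ on all of $K_n\setminus K_{n-1}$) is incompatible with (i), since points of $K_n\setminus K_{n-1}$ accumulate on $\bb K_{n-1}$, where $\Re g_n$ is $\epsilon_n$-close to $\Re g_{n-1}$ and need not be large. The paper only imposes the lower bound on $z^{-1}(r_{n-1}\overline\d)\cap K_n\setminus K_{n-1}$ (the components of $z^{-1}(r_{n-1}\overline\d)$ in $K_n$ other than $K_{n-1}$, which are at positive distance from $K_{n-1}$); on the rest of $K_n\setminus \mathring K_{n-1}$ properness is supplied by $|z|$ itself. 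You should weaken (iii) accordingly.
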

Admitting a proper holomorphic function is a strong restriction on an open Riemann surface. In particular, it implies that the surface is parabolic in the sense of Ahlfors-Nevanlinna \cite{Nevanlinna1941,Ahlfors1947}; equivalently, it does not carry nonconstant negative subharmonic functions (see \cite[Section IV.6]{AhlforsSario1960}). However, as shown by Bishop back in 1961 \cite{Bishop1961AJM}, every open Riemann surface $M$ admits an almost proper holomorphic function. In fact, the set of all such functions is residual (and hence dense) in the space of all holomorphic functions on $M$; see \cite[Chapter 7]{GunningRossi2009}. Since being simple is a generic condition, a standard transversality argument then guarantees that every open Riemann surface admits simple almost proper holomorphic functions, and hence Theorem \ref{th:h} trivially implies Theorem \ref{th:intro}.

We prove Theorem \ref{th:h} in Section \ref{sec:proofTh-h} via the main lemma (Lemma \ref{le:fun}), whose proof is deferred to Section \ref{sec:proof-lemma}. The proof consists in constructing a holomorphic map $M\to\C^2$ whose real part $h\colon M\to\r^2$ satisfies the following conditions. Recall that proper injective immersions $M\to\r^4$ are embeddings. 
\begin{enumerate}[11]
\item[$\bullet$] $h$ is a local diffeomorphism around every critical point of $z$. This implies that $(z,h)\colon M\to\r^4$ is an immersion.
\smallskip
\item[$\bullet$] $h$ separates the fibers of $z$ in the sense that $h|_{z^{-1}(\zeta)}$ is injective for all $\zeta\in\c$. This implies that $(z,h)$ is an injective map on $M$.
\smallskip
\item[$\bullet$] There are a divergent sequence $0<r_1<r_2<\cdots$ of regular values of $|z|$ and an exhaustion $K_1\Subset K_2\Subset\cdots \Subset\bigcup_{j\ge 1}K_j=M$ of $M$ by smoothly bounded compact connected regions such that $K_j$ is a component of $z^{-1}(r_j\overline\d)$ for all $j\ge 1$ (sequences with this property exist by almost properness of $z$), and we have $|h|>r_{j-1}$ everywhere on $z^{-1}(r_{j-1}\overline\d) \cap K_j\setminus K_{j-1}$ for all $j\ge 2$. Here $\d\subset \c$ denotes the open unit disc. This implies that $(z,h)\colon M\to \r^4$ is a proper map.
\end{enumerate}

We obtain the map $h$ as a uniform limit of a sequence of harmonic maps $h_j\colon K_j\to\r^2$ enjoying conditions analogous to the mentioned ones, but only in the compact set $K_j$. In particular, we shall make sure that every map $(z|_{K_j},h_j)\colon K_j\to\c\times\r^2$ in the sequence is injective. Roughly speaking, for constructing $h_j$ from $h_{j-1}$ we split $K_j\setminus \mathring K_{j-1}$ in a rather sophisticated sort of puzzle and carefully define $h_j$ in each piece, by using Runge-Mergelyan-type approximation with interpolation (see Theorem \ref{th:Arakelyan}) and exploiting the fact that $z|_{\mathring K_j}\colon \mathring K_j\to r_j\d$ is a simple branched covering. We construct the first and the second component functions of the harmonic map $h_j$ in turn, the second one strongly depending on the first one.

\begin{remark}\label{rem:n=3}
Theorem \ref{th:intro} proves Conjecture 3.10.7 in \cite{AlarconForstnericLopez2021Book}. It remains an open question whether $4$ is the smallest embedding dimension for open Riemann surfaces by harmonic functions.  We expect that it is. In fact, we conjecture that there exists no proper harmonic embedding $\d\to\r^3$. 
Proving this conjecture would provide an extension of the classical theorem by Heinz from 1952 stating that there exists no harmonic diffeomorphism $\d\to\r^2$ \cite{Heinz1952}.
\end{remark}

\begin{remark}\label{rem:nonorientable}
Theorem \ref{th:intro} implies that {\em every open orientable Riemannian surface admits a proper harmonic embedding into $\r^4$}. The statement in the theorem makes sense for nonorientable conformal surfaces as well. However, our proof does not seem to adapt to this case, and hence the question whether every open nonorientable Riemannian surface properly harmonically embeds in $\r^4$ remains open. 
\end{remark}

\begin{remark} \label{rem:Narasimhan}
Theorem \ref{th:h} shows that every open Riemann surface $M$ admits a proper harmonic embedding into $\r^4$ of the form $(z,h)\colon M\to\c\times\r^2$ with a holomorphic function $z$ on $M$. We do not know whether the function $h\colon M\to\r^2\equiv\c$ can be chosen holomorphic. Our results are thus connected with the classical Forster-Bell-Narasimhan Conjecture asking whether every open Riemann surface embeds as a smooth closed complex curve in $\c^2$ \cite{Forster1970CMH,BellNarasimhan1990EMS}. This seems one of the most difficult open problems in complex geometry. Only few open Riemann surfaces are known to carry a proper holomorphic embedding into $\c^2$; we refer to the survey in \cite[Sections 9.10-911]{Forstneric2017} 
for a history of this question. The results in the present paper provide some extra support for this longstanding conjecture to hold true.

Passing to higher dimensions, every Stein manifold $Z^k$ of complex dimension $k\ge 2$ is known to admit a proper holomorphic embedding into $\c^q$ for $q=[\frac{3k}2]+1$. This was proved by Eliashberg and Gromov \cite{EliashbergGromov1992AM} for even $k$ (their method of proof requires one more dimension, $\frac{3k+1}2+1$, for odd $k$), and by Sch\"urmann \cite{Schurmann1997MA} for odd $k$. The proper embedding dimension $[\frac{3k}2]+1$ is optimal for all dimensions $k\ge 2$ due to purely topological reasons, as Forster had previously shown by simple examples in \cite{Forster1970CMH}. Similar to ours, the proof by Sch\"urmann consists of starting with a generic almost proper holomorphic map $f\colon Z^k\to\C^k$ and finding a holomorphic map $g\colon Z^k\to \C^{q-k}$ such that the pair $(f,g)\colon  Z^k\to \C^q$ is a proper embedding.  Such a map $g$ is obtained by a suitable application of the Oka principle which cannot be used when $k=1$, and hence the proof  breaks down for open Riemann surfaces. (Eliashberg and Gromov began with a proper holomorphic map $Z^k\to\C^{k+1}$; this is why they lost one dimension compared to the optimal bound for odd $k$. We refer to \cite[Sections  9.3 and 9.4]{Forstneric2017} for the background information about this embedding problem and a full exposition of the proofs.)
We do not know whether our proof of Theorem \ref{th:h} could be simplified by implementing a suitable  homotopy principle for harmonic maps, which would allow to construct the two components of the  harmonic map $h$  simultaneously.
\end{remark}

%%%
%%%
%%%

\section{Preliminaries}\label{sec:prelim}

\noindent 
We let $\n=\{1,2,3,\ldots\}$ and denote by $|\cdot|$ the Euclidean norm in $\r^n$ for any $n\in\n$. For a set $A$ in a topological space $X$, we denote by $\overline A$ and $\mathring A$ the topological closure and interior of $A$ in $X$, respectively. For a subset $B\subset X$ we write $A\Subset B$ when $\overline A\subset\mathring B$. 
We use the notation $\Ccal^0(X,\r^n)$ or $\Ccal^0(X,\r)^n$ for the space of all continuous maps $X\to \r^n$, and set
\[
	\|f\|_{X}=\sup\{|f(x)|\colon x \in X\}, \quad f\in \Ccal^0(X,\r^n).
\]

An {\em open} surface is a non-compact surface with no boundary. In this paper surfaces are assumed to be connected unless the contrary is indicated. Given a topological surface, $M$, we denote by $\bb  M$  the $1$-dimensional topological manifold determined by its boundary points. Open connected subsets of $M\setminus \bb  M$ are called {\em domains}, while closed topological subspaces of $M$ whose all connected components are  surfaces with boundary  are said to be   {\em regions}. 
A region $W$ of  a topological surface $M$ (possibly with boundary)  is said to be a
{\em tubular neighborhood}  of a closed subset $F$ of $M$  if  $F\subset \mathring W$ and $F$ is a strong deformation retract of $W$.  

Let $M$ be an open possibly disconnected Riemann surface.  

A function $f\colon X\to\c$ on a subset $X\subset M$ is said to be {\em holomorphic} if it extends holomorphically to an unspecified open neighborhood of $X$ in $M$. As it is customary, we denote by $\Ocal(X)$ the space of all such functions. If $f\in\Ocal(M)$, then a point $p\in M$ is said to be a {\em critical point} or a {\em branch point} of $f$ if $df_p=0$; the set
\[
	{\rm Crit}(f)=\{p\in M\colon df_p=0\}
\]
of all such points is called the {\em critical set} or {\em critical locus} of $f$. We call $f({\rm Crit}(f))\subset \c$ as the set of {\em critical values} of $f$. A critical point $p$ of $f$ is said to be {\em simple} if the zero of $df$ at $p$ is of order $1$. 
%
% Definition
%
\begin{definition}\label{def:simple-f}
A holomorphic function $f$ on an open Riemann surface $M$ is called {\em simple} if its critical points are all simple and $f$ is injective on ${\rm Crit}(f)$. If $X\subset M$, then we say that a function in $\Ocal(X)$ is {\em simple} if it is simple on an unspecified neighborhood of $X$ in $M$.
\end{definition}

Let $\Sigma\subset \c$ be a domain and assume that we have a holomorphic {\em branched covering}
$z\colon M\to \Sigma$. 
This means that every point $\zeta\in \Sigma$ admits an open disc neighborhood $\zeta\in D_\zeta\subset \Sigma$  such that each connected component $U$ of $z^{-1}(D_\zeta)$ contains a single point of $z^{-1}(\zeta)$ and 
\[
	z|_{U\setminus z^{-1}(\zeta)}\colon 
	U\setminus z^{-1}(\zeta)\to D_\zeta\setminus\{\zeta\}
\] 
is a finite  (unbranched) topological covering. If $A=z({\rm Crit}(z))$ and $B=z^{-1}(A)$, then $B$ is a closed discrete subset of $M$ and 
\[
	z|_{M\setminus B}\colon M\setminus B\to \Sigma\setminus A
\] 
 is an (unbranched) covering map; we denote by ${\rm d}_z\in \n\cup \{\infty\}$ its {\em degree}. 
The holomorphic branched covering $z\colon M\to \Sigma$ is said to be {\em finite} if ${\rm d}_z\in \n$; this is equivalent to $z\colon M\to \Sigma$ be a proper map. 

Let $r>0$. We denote by $\d\subset \c$ the open unit disc, $r\d=\{\zeta\in\c\colon |\zeta|<r\}$, and  $r\s^1=\bb (r\overline\d)$.  If $z\in \Ocal(K)$ for an {\em analytic} compact region $K\subset M$ such that $z|_{\mathring K}\colon \mathring K\to r\d$ is a branched covering, then the function $z$ has no critical points in  $\bb K$. In this case, we have that ${\rm Crit}(z|_{\mathring K})$ is finite and the branched covering $z|_{\mathring K}\colon  \mathring K\to r\d$ is finite. 
On the other hand, if $z\in \Ocal(M)$ is almost proper, then for any component $\Omega$ of $z^{-1}(r\d)$, the map $z|_\Omega\colon \Omega\to r\d$ is a finite branched covering; in particular, $z$ is surjective. These two elementary facts will be recurrently used throughout the paper. 

A compact subset $K\subset M$  is said to be {\em Runge} (also {\em holomorphically convex} or {\em $\mathcal{O}(M)$-convex}) if it has no holes: $M\setminus K$ has no relatively compact connected components. For instance, if $z\in \Ocal(M)$ and $K\subset M$ is a compact region such that  $z|_{\mathring K}\colon \mathring K\to r\d$ is a finite branched covering for some $r>0$, then $K$ is Runge in $M$. 

For a subset $X\subset M$, we denote by $\Re\Ocal(X)$ the space of functions $h\colon X\to\r$ such that $h=\Re(f)$ for some function $f\in\Ocal(X)$. Likewise, we set $\Acal(X)=\Ccal^0(X,\c)\cap \Ocal(\mathring X)$
and denote by $\Re\Acal(X)$ the space of functions $h\colon X\to\r$ such that $h=\Re(f)$ for some function $f\in\Acal(X)$.
We shall say that a function $h\in \Re\Acal (X)$ {\em can be uniformly approximated on $X$ by functions in $\Re\Ocal (M)$} if 
 \[
 	\inf\big\{\|v-h\|_{X}\colon v\in \Re\Ocal(M)\big\}=0.
\]
In this framework, the classical Runge-Mergelyan approximation theorem with interpolation on open Riemann surfaces (see Bishop \cite{Bishop1958PJM} or e.g. \cite[Theorem 1.12.1]{AlarconForstnericLopez2021Book} or \cite[Theorem 5]{FornaesForstnericWold2020}) reads as follows. Recall that a harmonic function on a Riemann surface is locally the same thing as the real part of a holomorphic function.
\begin{theorem}\label{th:Arakelyan}
If $K$ is a Runge compact set in an open Riemann surface $M$ and $\Upsilon\subset \mathring K$ is a  finite set, then every  function $h\in\Re\Acal(K)$  can be  uniformly approximated   on $K$ by  functions $\tilde h\in\Re\Ocal(M)$ such that $\tilde h-h$ vanishes to any given order $k\ge 0$ at every point in $\Upsilon$.
\end{theorem}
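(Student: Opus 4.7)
The plan is to first establish the analogous statement for holomorphic functions (with $\Ocal(M)$ in place of $\Re\Ocal(M)$ and $\Acal(K)$ in place of $\Re\Acal(K)$) and then deduce the stated real-part form as an immediate corollary.

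For the approximation-only form of this holomorphic version, namely that every $f\in\Acal(K)$ can be uniformly approximated on $K$ by elements of $\Ocal(M)$, I would appeal to the classical Bishop--Mergelyan theorem on open Riemann surfaces. One route is Bishop's localization principle, reducing matters to a Runge-type statement on coordinate disks; another route uses sheaf cohomology together with the Behnke--Stein theorem $H^1(M,\Ocal_M)=0$, which embodies the Stein property of open Riemann surfaces and allows one to solve the relevant $\dibar$-problem.

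To upgrade the holomorphic version to include interpolation, I would use the following division trick. Pick $g_0\in\Ocal(M)$ with a zero of order exactly $k+1$ at each point of $\Upsilon$ and no other zeros on $M$ (existence is the Weierstrass-type divisor theorem, again a consequence of the Stein property). Pick $P\in\Ocal(M)$ whose $k$-jet at each point of $\Upsilon$ agrees with that of $f$ (Weierstrass interpolation; note $\Upsilon\subset\mathring K$, where $f$ is genuinely holomorphic, so the jets of $f$ make sense). Then $f-P\in\Acal(K)$ vanishes to order $\ge k+1$ on $\Upsilon$, so $(f-P)/g_0$ lies in $\Acal(K)$. Applying the approximation-only form, choose $r\in\Ocal(M)$ with $\|r-(f-P)/g_0\|_K<\epsilon/\|g_0\|_K$; then $\tilde f:=P+g_0 r\in\Ocal(M)$ satisfies $\|\tilde f-f\|_K<\epsilon$, while $\tilde f-f=g_0 r-(f-P)$ is a sum of two functions each vanishing to order $\ge k+1$ on $\Upsilon$, so it vanishes there to order $k$ as required.

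Finally, to reduce the real-part version to this: by the very definition of $\Re\Acal(K)$, any $h$ in this space is of the form $h=\Re F$ for some $F\in\Acal(K)$. Applying the holomorphic version just established to $F$ produces $\tilde F\in\Ocal(M)$ with $\|\tilde F-F\|_K<\epsilon$ and $\tilde F-F$ vanishing to order $k$ at every point of $\Upsilon$. Setting $\tilde h:=\Re\tilde F\in\Re\Ocal(M)$ gives $\|\tilde h-h\|_K\le\|\tilde F-F\|_K<\epsilon$, and the $k$-jet of $\tilde h-h=\Re(\tilde F-F)$ at any $p\in\Upsilon$ is the real part of the vanishing $k$-jet of $\tilde F-F$, hence vanishes. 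The main obstacle in this scheme is, by a wide margin, the foundational Bishop--Mergelyan theorem underlying Step 1; the division trick is a standard algebraic manoeuvre, and the real-part reduction is essentially automatic because $\Re\Acal(K)$ is defined precisely as real parts of $\Acal(K)$-functions, so the period obstructions that would normally prevent lifting a harmonic function to a holomorphic one are absent by hypothesis.
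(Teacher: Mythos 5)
Your argument is correct, and there is nothing in the paper to measure it against line by line: Theorem \ref{th:Arakelyan} is not proved in the paper but quoted as a classical result, with references to Bishop \cite{Bishop1958PJM}, \cite[Theorem 1.12.1]{AlarconForstnericLopez2021Book}, and \cite[Theorem 5]{FornaesForstnericWold2020}. Your reduction is the standard way those sources obtain the jet-interpolation refinement from the plain Runge--Mergelyan statement, and each step checks out: Bishop's approximation theorem for $\Acal(K)$ on a Runge compact $K$ is correctly isolated as the one genuinely deep input; the Weierstrass-type facts you invoke (a function $g_0\in\Ocal(M)$ with zeros exactly on $\Upsilon$ of order $k+1$, and a jet-interpolating $P\in\Ocal(M)$) hold because open Riemann surfaces are Stein; $(f-P)/g_0$ does lie in $\Acal(K)$ since $g_0$ is zero-free on $K\setminus\Upsilon$ and the singularities at $\Upsilon\subset\mathring K$ are removable; and the estimate $\|\tilde f-f\|_K\le\|g_0\|_K\,\|r-(f-P)/g_0\|_K$ together with the order-$(k+1)$ vanishing of both $g_0r$ and $f-P$ gives the required interpolation with room to spare. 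Your final observation is also the right one: the passage to $\Re\Acal(K)$ is automatic precisely because the paper defines $\Re\Acal(K)$ as real parts of functions in $\Acal(K)$, so no period or conjugate-function obstruction can appear, and the vanishing of the $k$-jet of $\Re(\tilde F-F)$ follows from that of $\tilde F-F$ in any local holomorphic coordinate.
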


%%%
%%% Proof of Theorem 
%%%

\section{Proof of Theorem \ref{th:h}}\label{sec:proofTh-h}

\noindent 
%
% Lemma
%
The proof of Theorem \ref{th:h} follows from an inductive application of the following lemma and its subsequent corollary. We shall defer the proof of the lemma for Section \ref{sec:proof-lemma}.
\begin{lemma} \label{le:fun}
Let $M$ be an open Riemann surface, let $z\colon M\to \c$ be a simple holomorphic function (Definition \ref{def:simple-f}), and assume that $0<r<s$ are noncritical values of $|z|$ such that
$S=z^{-1}(s \overline \d)$ 
is compact and connected. Define
$R=z^{-1}(r \overline \d)\Subset S$ and let $h=(h_1,h_2)\in\Re\Ocal(R)^2$ such that:
\begin{enumerate}[{\rm (I)}]
\item  $(z|_R,h)\colon  R \to \c\times \r^2$ is an embedding.
\smallskip
\item $\hgot_1\cup \hgot_2$ is finite and $\hgot_1\cap \hgot_2=\varnothing$, where $\hgot_j=\{\zeta\in r\s^1\colon h_j|_{z^{-1}(\zeta)}$ is not injective$\}$, $j=1,2$.
\end{enumerate}
If we are given a finite set $\hgot\subset s\s^1$, then for any $\epsilon>0$ there exists a map $\tilde h=(\tilde h_1,\tilde h_2)\in \Re\Ocal(S)^2$ satisfying the following conditions:
\begin{enumerate}[{\rm (i)}]
\item $(z|_{S},\tilde h)\colon  S  \to  \c\times \r^2$ is an embedding.
\smallskip   
\item $\|\tilde h-h\|_R< \epsilon$ and $\tilde h-h$ vanishes to order $1$ at every point in ${\rm Crit}(z)\cap R$.\hspace*{-2mm}
\smallskip
\item $\tilde \hgot_1\cup \tilde \hgot_2$ is finite, $\tilde \hgot_1\cap \tilde \hgot_2=\varnothing$, and $(\tilde \hgot_1\cup\tilde \hgot_2)\cap\hgot=\varnothing$, where $\tilde \hgot_j=\{\zeta\in s\s^1\colon \tilde h_j|_{z^{-1}(\zeta)}$ is not injective$\}$, $j=1,2$.
\end{enumerate}
\end{lemma}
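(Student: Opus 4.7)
My plan is to produce a continuous extension of $h=(h_1,h_2)$ to $S$ lying in $\Re\Acal(S)^2$ that already fulfills analogs of (i) and (iii), and then smooth it via the Runge--Mergelyan approximation with interpolation (Theorem \ref{th:Arakelyan}) to land in $\Re\Ocal(M)^2$. Since $r$ and $s$ are noncritical values of $|z|$ and $z$ is simple, the map $z|_{\mathring S}\colon\mathring S\to s\d$ is a finite branched covering of some degree $d\in\n$, so $S$ is Runge in $M$; writing $T:=S\setminus\mathring R$ and $\Sigma:=s\overline\d\setminus r\d$, the restriction $z|_T\colon T\to\Sigma$ is a finite branched covering whose critical values form a finite subset of $\mathring\Sigma$.

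The geometric heart of the construction is a puzzle decomposition of $T$ modelled on a cut system in $\Sigma$. I would select pairwise disjoint smooth arcs in $\Sigma$, one through each critical value of $z|_T$ and joining $r\s^1$ to $s\s^1$, together with (if necessary) one extra such arc, with all endpoints chosen to miss the finite set $\hgot\cup\hgot_1\cup\hgot_2$. Cutting $\Sigma$ along this system yields a finite union of closed topological rectangles, and pulling the system back by $z|_T$ cuts $T$ into a finite union of closed topological discs---the puzzle pieces---each of which maps biholomorphically onto a rectangle, except that two pieces meeting along a branch arc share a branch point on their boundary.

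On this decomposition I would first define $\tilde h_1$ piece by piece as the real part of a holomorphic function: matching $h_1$ on $\bb R\cap T$, continuous across every interior arc, with $d\tilde h_1$ nonzero at every critical point of $z$ in $\mathring T$, and arranged so that over a generic $\zeta\in\mathring\Sigma$ the $d$ values of $\tilde h_1$ on $z^{-1}(\zeta)$ are pairwise distinct. The bad locus $\tilde\hgot_1\subset s\s^1$ is then a finite set that I can push off $\hgot$. Next, with $\tilde h_1$ fixed, I would construct $\tilde h_2$ by the same procedure but additionally requiring that (a) for every $\zeta\in\tilde\hgot_1$ the restriction $\tilde h_2|_{z^{-1}(\zeta)}$ is injective; (b) the failure set $\tilde\hgot_2\subset s\s^1$ is disjoint from $\tilde\hgot_1\cup\hgot$; and (c) at every critical point $p$ of $z$ in $T$ the pair $(d\tilde h_1(p),d\tilde h_2(p))$ spans a two-dimensional space, so that $(z,\tilde h)$ is an immersion at $p$. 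Finally, applying Theorem \ref{th:Arakelyan} componentwise with $K=S$ and interpolation of order $1$ at the finite set $\mathrm{Crit}(z)\cap R$, I obtain $\tilde h\in\Re\Ocal(M)^2$ uniformly close to the continuous extension on $S$; a sufficiently small approximation error then yields (ii), preserves (I) on $R$, and preserves---by continuity on the compact set $S$---both the injectivity of $(z|_S,\tilde h)$ and the immersion at branch points of $z$ in $T$, as well as the required finiteness and disjointness properties of $\tilde\hgot_1,\tilde\hgot_2$ on $s\s^1$.

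The main obstacle will be the middle step: simultaneously achieving conditions (a)--(c) across the branch arcs of the puzzle while keeping $\tilde\hgot_1\cup\tilde\hgot_2$ finite and disjoint from $\hgot$ requires a coordinated, piece-by-piece choice of holomorphic data, especially near each branch point, where two sheets of the covering come together and the two components must jointly resolve the branching without reintroducing fiber collisions on $s\s^1$. This delicate bookkeeping is the technical core of the lemma and explains the authors' remark that the construction of $h_2$ ``strongly depends'' on that of $h_1$.
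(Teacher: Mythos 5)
Your overall two\hyphen stage philosophy (build the first component, then let the second component repair its failures, finish with Theorem \ref{th:Arakelyan} plus interpolation) matches the paper, but the plan as written has two genuine gaps. First, the gluing step does not produce admissible data for Theorem \ref{th:Arakelyan}. That theorem requires $h\in\Re\Acal(S)$, i.e.\ harmonicity on all of $\mathring S$, and your cut arcs lie in $\mathring S$: a function defined harmonically on each closed puzzle piece and matched only \emph{continuously} across the interior arcs is in general not harmonic there, and since a uniform limit of harmonic functions is harmonic, such an extension cannot be uniformly approximated by functions of $\Re\Ocal(M)$ at all. Arranging the piecewise data so that it glues harmonically is essentially the original problem. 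The paper sidesteps this by hand\hyphen building the extensions only on compact ``skeletons'' with tiny interior --- $\hat K_1=R\cup\hat\Gamma_{A_0}\cup\hat U_A\cup\hat\hgot$ for $h_1$, and later $\hat K_2$ for $h_2$ --- so harmonicity is needed only on $R$, the $z$-saturated discs $\hat U_A$ and $\hat Y$, while on the arcs continuity suffices. The price is that after approximating one has no control of $\tilde h_1$ on $S$ away from the skeleton; the whole second half of the paper's proof (the real\hyphen analytic failure locus $[\tilde h_1]$, the second Runge set $\hat K_2\supset[\tilde h_1]$, and the fiber ordering \eqref{eq:OmOm'} used to define $h_2$ on the arcs $\hat\varpi^b_\pm$) exists precisely to confine that uncontrolled failure set and cure it with $\tilde h_2$. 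Your proposal has no mechanism playing this role.

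Second, your closing stability claim is false as stated: injectivity of a continuous map on a compact surface is not open in the uniform topology. Near an interior branch point $b$ of $z$ the two sheets coalesce, so there is no uniform fiber gap $|h(p)-h(q)|\ge\delta$ over $z(p)=z(q)$ to be preserved by a small $C^0$ error; moreover, you interpolate to order $1$ only at ${\rm Crit}(z)\cap R$, so your spanning condition (c) at the branch points in $S\setminus\mathring R$ is simply lost in the approximation, and with it local injectivity there. The paper handles this by interpolating to order $1$ at \emph{all} of ${\rm Crit}(z|_S)$ for both components (conditions \eqref{eq:tildeh1-interpola} and \eqref{eq:emb-l}) and, crucially, by replacing naive ``injectivity is stable'' reasoning with quantitatively stable statements: $(\tilde h_1,\Im(\psi^b))$ is an embedding of the saturated disc $\hat U^b$ with $h_2=\Im(\psi^b)$ there, $h_2$ is locally constant with distinct values on the finitely many sheets over $\hat Y$, and the arcs $\hat\varpi^b_\pm$ are separated by the strict inequality $h_2(p_-)<h_2(p_+)$ on compact sets --- all properties that do survive a sufficiently small uniform perturbation. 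Until you supply analogues of these two ingredients (a skeleton on which continuity alone is enough off the interior, and a stable, structural verification of injectivity near the branch points), the proposal does not close.
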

We emphasize that both $S$ and $R$ are compact analytic regions in $M$; $R$ need not be connected. Moreover, $z|_{\mathring S}\colon \mathring S   \to s   \d$ is a simple branched covering.
Conditions {\rm (I)} and {\rm (II)} hold for any map $h\in\Re\Ocal(R)^2$ provided that   $z|_R$ is a biholomorphism. 
%
% Corollary
%
\begin{corollary} \label{co:fun}
Let $M$, $z$, $s$, $S$, and $\hgot$ be as in Lemma \ref{le:fun}, and assume that $0\in\c$ is a noncritical value of $z$. Then, for any $\lambda>0$  there exists $h=(h_1,h_2)\in \Re\Ocal(S)^2$ satisfying the following conditions:
\begin{enumerate}[{\rm (i)}]
\item $(z|_{S}, h)\colon  S  \to  \c\times \r^2$ is an embedding.

\smallskip   
\item $\hgot_1\cup \hgot_2$ is finite, $\hgot_1\cap \hgot_2=\varnothing$, and $(\hgot_1\cup \hgot_2)\cap\hgot=\varnothing$, where $\hgot_j=\{\zeta\in s\s^1\colon h_j|_{z^{-1}(\zeta)}$ is not injective$\}$, $j=1,2$.

\smallskip
\item $|h_j(p)|>\lambda$ for all $p\in S$, $j=1,2$.
\end{enumerate}
\end{corollary}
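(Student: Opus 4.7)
The plan is to reduce directly to Lemma~\ref{le:fun} by feeding it the simplest possible base case $h$ on a small region $R$ surrounding the fibre $z^{-1}(0)$, and then enforce condition (iii) by shifting the output of the lemma by a large constant vector.

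First I would build $R$ and $h$. Because $0$ is a noncritical value of $z$, the fibre $z^{-1}(0)$ is discrete; since it lies in the compact set $S$, it is finite, and $z$ is locally biholomorphic at each of its points. The critical values of $|z|$ on $S$ form a set of measure zero, and outside an arbitrarily small neighborhood of $z^{-1}(0)$ the function $|z|$ is bounded below by a positive constant, so I can pick $r\in(0,s)$ noncritical for $|z|$ and small enough that $R:=z^{-1}(r\overline\d)$ is a disjoint union of pairwise disjoint closed topological discs $D_1,\dots,D_N\Subset\mathring S$ with each $z|_{D_k}\colon D_k\to r\overline\d$ a biholomorphism. Now choose pairs of real numbers $(a_1,b_1),\dots,(a_N,b_N)$ whose first coordinates are pairwise distinct and whose second coordinates are pairwise distinct, and define $h=(h_1,h_2)\in\Re\Ocal(R)^2$ by $h|_{D_k}\equiv(a_k,b_k)$. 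Hypothesis (I) of Lemma~\ref{le:fun} holds because $z$ is injective on each $D_k$ and different components carry different $h$-values; hypothesis (II) is vacuous with $\hgot_1=\hgot_2=\varnothing$, since for every $\zeta\in r\s^1$ the fibre $z^{-1}(\zeta)\cap R$ contains exactly one point per disc and those $N$ points have pairwise distinct $h_j$-coordinates for $j=1,2$.

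Second, I would apply the lemma and shift. Applying Lemma~\ref{le:fun} to this data with the given $\hgot\subset s\s^1$ and any $\epsilon>0$ produces $\hat h=(\hat h_1,\hat h_2)\in\Re\Ocal(S)^2$ such that $(z|_S,\hat h)\colon S\to\c\times\r^2$ is an embedding and the corresponding sets $\tilde\hgot_1,\tilde\hgot_2\subset s\s^1$ are finite, disjoint, and disjoint from $\hgot$. Now pick real constants $c_1,c_2$ with $|c_j|>\lambda+\|\hat h_j\|_S$ and set $h:=(\hat h_1+c_1,\hat h_2+c_2)\in\Re\Ocal(S)^2$. Adding a real constant preserves injectivity of the joint map and of each coordinate on every fibre, so conditions (i) and (ii) of the corollary transfer verbatim from the lemma's conclusions; meanwhile, the reverse triangle inequality yields $|h_j(p)|\ge|c_j|-|\hat h_j(p)|>\lambda$ for all $p\in S$, which is (iii).

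I do not expect any genuine obstruction in this argument, since the corollary is essentially a packaging of Lemma~\ref{le:fun} together with a trivial initial configuration. The only mildly delicate points are the geometric selection of $r$, handled by compactness of $S$ together with the local biholomorphicity of $z$ near $z^{-1}(0)$, and the remark that the constant shift is harmless because the lemma's injectivity-type conclusions are translation-invariant.
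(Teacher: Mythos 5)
Your proposal is correct and follows essentially the same route as the paper: take $r>0$ small enough that $z$ has no critical values in $r\overline\d$, so $R=z^{-1}(r\overline\d)$ is a union of discs mapped biholomorphically by $z$, feed Lemma \ref{le:fun} a locally constant $h$ with distinct values on the components (making (I) hold and (II) vacuous), and then add a large constant to the output to force (iii), which is harmless since the embedding and fiber-injectivity conclusions are translation-invariant. No gaps.
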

\begin{proof}[Proof of Corollary \ref{co:fun} assuming Lemma \ref{le:fun}]
Let $0<r<s$ be so small that $z$ has no critical values in $r\overline\d$ (use that $0$ is a noncritical value of $z$ and recall that $S=z^{-1}(s\overline\d)$ is compact). Observe that $R:=z^{-1}(r \overline \d)$ is a union of $m={\rm d}_{z|_S}\ge 1$ (the degree of $z|_S$) pairwise disjoint closed discs, all of them biholomorphic to $r\overline\d$ via $z$.
Choose any map $f=(f_1,f_2)\in \Re\Ocal(R)^2$ such that $(z|_R,f)\colon  R \to \c\times \r^2$ is an embedding and $f_j|_{z^{-1}(\zeta)}$ is injective for all $\zeta\in r\s^1$ and  $j\in\{1,2\}$.
Such a map $f$ trivially exists; it can even be chosen locally constant. Lemma \ref{le:fun} applied to these data furnishes us with a map $  h\in\Re\Ocal(S)^2$ satisfying {\rm (i)} and {\rm (ii)}. Since $S$ is compact and $h$ is continuous, there is a constant $c\in\r^2$ such that the map $c+ h$ meets also {\rm (iii)}.
\end{proof}
%
% end of proof
%

Granted Lemma \ref{le:fun}, the proof of Theorem \ref{th:h} is completed as follows.

Let $M$ be an open Riemann surface and assume that $z\colon M\to \c$ is a simple almost proper holomorphic function. Up to replacing $z$ by $z-\zeta_0$ for a noncritical value $\zeta_0\in\c$ of $z$, we may and shall assume that $0\in\c$ is a noncritical value of $z$.
Let $0<r_1<r_2<\cdots$ be a divergent sequence of noncritical values of $|z|$. Since $z$ is almost proper, an elementary topological argument furnishes us with an exhaustion 
\[
	K_1\Subset K_2\Subset \cdots 
	\Subset  \bigcup_{j\in \n} K_j=M
\]
of $M$ by compact connected analytic regions such that $K_j$ is a component of $z^{-1}(r_j\overline\d)$ for all $j\in\n$. Note that ${\rm Crit}(z)\cap \bb K_j=\varnothing$ and $z|_{\mathring K_j}\colon  \mathring K_j\to r_j   \d$ is a simple branched covering for all $j\in \n$. Set $K_0=\varnothing$ and $r_0=0$. 

Let ${\rm dist}\colon M\times M\to [0,+\infty)$ be the distance associated to any fixed Riemannian metric on $M$. We claim that there
is a sequence of maps
\begin{equation}\label{eq:virgen-santa}
	  h_i=(h_{i,1},h_{i,2})\in\Re\Ocal(K_i)^2, \quad i\in \n,
\end{equation}
satisfying the following conditions for all $i\in\n$:
\begin{enumerate}[{\rm (a$_{i}$)}]
\item $(z|_{K_i}, h_i)\colon  K_i  \to  \c\times \r^2$ is an embedding.

\smallskip
\item $\hgot_{i,1}\cup \hgot_{i,2}$ is finite and $\hgot_{i,1}\cap \hgot_{i,2}=\varnothing$, where $\hgot_{i,j}=\{\zeta\in r_i\s^1\colon h_{i,j}|_{z^{-1}(\zeta)}$ is not injective$\}$, $j=1,2$.

\smallskip
\item $|h_{i,j}(p)|>r_{l-1}$ for all $p\in z^{-1}(r_{l-1}\overline\d)\cap K_l\setminus K_{l-1}$, $l=1,\ldots,i$, and $j=1,2$.

\smallskip   
\item $h_i-h_{i-1}$ vanishes to order $1$ at every point in ${\rm Crit}(z)\cap K_{i-1}$ and $\|h_i-h_{i-1}\|_{K_{i-1}}< \min\{1/2^i\,,\, \delta_i\}$, where
\[
	\delta_i= \frac1{2i^2}\inf \Big\{ |(z,h_{i-1})(p)-(z,h_{i-1})(q)|\colon p,q\in K_{i-1},\; {\rm dist}(p,q)>\frac1{i} \Big\}.
\] 
\end{enumerate}
Indeed, we proceed by induction. The base case is ensured by Corollary \ref{co:fun} applied with $M$ replaced by the interior of a suitable  tubular neighborhood of $K_1$ to the data $z$, $s=r_1$, $S=K_1$, $\hgot=\varnothing$, and any $\lambda>0$. This gives a map $h_1=(h_{1,1},h_{1,2})\in \Re\Ocal(K_1)^2$ satisfying {\rm (a$_1$)}, {\rm (b$_1$)}, and {\rm (c$_1$)}; note that {\rm (d$_1$)} is empty. For the inductive step, fix $i\in\n$, suppose that we have $h_i=(h_{i,1},h_{i,2})\in \Re\Ocal(K_i)^2$ satisfying conditions {\rm (a$_i$)}--{\rm (d$_i$)}, and let us construct $h_{i+1}$.
Recall that $K_{i+1}$ is a compact connected analytic region and  $z|_{\mathring K_{i+1}}\colon\mathring K_{i+1}\to r_{i+1}\d$ is a simple branched covering. We define 
\[
	K_{i}'= z^{-1}(r_{i}\overline \d)\cap K_{i+1} 
\]
and write $S_1,\ldots,S_k$ for its connected components, where $S_1=K_i$. Note that 
$z|_{S_m}\colon S_m\to r_i\overline \d$ is a  simple branched covering, $m=1,\ldots,k$. Define $L_m=\bigcup_{l=1}^m S_l$ for $m\in\{1,\ldots,k\}$, and let $f_1=h_i$.
We claim that there is a sequence of maps $f_m=(f_{m,1},f_{m,2})\in\Re\Ocal(L_m)^2$, $m=1,\ldots,k$, such that:
\begin{enumerate}[{\rm (A{$_m$})}]
\item $(z|_{L_m}, f_m)\colon  L_m  \to  \c\times \r^2$ is an embedding.

\smallskip
\item $\fgot_{m,1}\cup \fgot_{m,2}$ is finite and $\fgot_{m,1}\cap \fgot_{m,2}=\varnothing$, where $\fgot_{m,j}=\{\zeta\in r_i\s^1\colon f_{m,j}|_{z^{-1}(\zeta)}$ is not injective$\}$, $j=1,2$.

\smallskip
\item $|f_{m,j}(p)|>r_i$ for all $p\in L_m\setminus S_1=\bigcup_{l=2}^m S_l$, $j=1,2$.

\smallskip
\item $f_m|_{L_{m-1}}=f_{m-1}$.
\end{enumerate}
The basis of the induction is given by $f_1=h_i$; note that {\rm (A$_1$)}={\rm (a$_i$)} and {\rm (B$_1$)}={\rm (b$_i$)} while {\rm (C$_1$)} and {\rm (D$_1$)} are empty. Assume that we have a map $f_m\in\Re\Ocal(L_m)^2$ for some $m\in\{1,\ldots,k-1\}$ satisfying  {\rm (A$_m$)}--{\rm (D$_m$)}. Let $g\in \Re\Ocal(S_{m+1})^2$ be provided by Corollary \ref{co:fun} applied with $M$ replaced by the interior of a suitable tubular neighborhood of $S_{m+1}$ to the data $z$, $s=r_i$, $S=S_{m+1}$, $\hgot=\fgot_{m,1}\cup \fgot_{m,2}$, and 
\begin{equation}\label{eq:lambda-grande}
	\lambda>\max\{r_i,\|f_m\|_{L_m}\}. 
\end{equation}
Extend $f_m$ to $L_{m+1}=L_m\cup S_{m+1}$ as a map $f_{m+1}\in \Re\Ocal(L_{m+1})^2$ by defining $f_{m+1}|_{S_{m+1}}=g$. It is clear that $f_{m+1}$ satisfies conditions {\rm (A$_{m+1}$)}--{\rm (D$_{m+1}$)}. In particular, {\rm (B$_{m+1}$)} is guaranteed by {\rm (B$_m$)}, condition {\rm (ii)} in the corollary, and inequality \eqref{eq:lambda-grande}. This concludes the construction of the maps $f_1,\ldots,f_k$.

Conditions {\rm (A$_k$)} and {\rm (B$_k$)} enable us to apply Lemma \ref{le:fun} with $M$ replaced by the interior of a suitable  tubular neighborhood of $K_{i+1}$ to the data
\[
	z,\; r=r_i,\; s=r_{i+1},\; S=K_{i+1},\; R= K_{i}'=L_k,\; h=f_k\in\Re\Ocal(K_{i}')^2,\; \hgot=\varnothing,
\]
and $0<\epsilon<\min\{2^{-(i+1)},\delta_{i+1}\}$. Choosing $\epsilon>0$ sufficiently small, this furnishes us with a map $h_{i+1}\in \Re\Ocal(K_{i+1})^2$ satisfying {\rm (a$_{i+1}$)}--{\rm (d$_{i+1}$)}, thereby closing the induction. In particular, {\rm (c$_{i+1}$)} is ensured by {\rm (c$_i$)}, {\rm (C$_k$)}, and the fact that $h_{i+1}$ is $\epsilon$-close to $f_k$ on $K_i'$. Note that $\delta_{i+1}>0$ by condition {\rm (a$_i$)}. This completes the inductive construction of the maps in \eqref{eq:virgen-santa}.

By properties {\rm (a$_i$)} and {\rm (d$_i$)}, $i\in \n$, the sequence $\{h_i\}_{i\in \n}$ converges uniformly on compact subsets of $M$ to a map $h \in\Re\Ocal(M)^2$ such that 
\begin{equation}\label{eq:zhimmersion}
	(z,h)\colon M=\bigcup_{i\in \n}K_i\to \r^4\quad \text{is an immersion}.
\end{equation}
We claim that $h$ satisfies the conclusion of the theorem. Indeed, let $p,q\in M$, $p\neq q$, and choose $i_0\in\n$ so large that $p,q \in K_{i_0}$ and ${\rm dist}(p,q)>1/i_0$. Conditions {\rm (d$_i$)} ensure that 
\[
	|(z,h_i)(p)-(z,h_i)(q)|\ge (1-\frac1{i^2})|(z,h_{i-1})(p)-(z,h_{i-1})(q)| \text{ for all $i>i_0$}, 
\]
and hence
\[
	|(z,h_{i_0+l})(p)-(z,h_{i_0+l})(q)|\ge |(z,h_{i_0})(p)-(z,h_{i_0})(q)|\cdot\prod_{i=i_0+1}^{i_0+l}\Big(1-\frac1{i^2}\Big), 
\]
for all $l\in\n$. Taking limits as $l\to\infty$ and using {\rm (a$_{i_0}$)} we obtain that $|(z,h)(p)-(z,h)(q)|\ge \frac12|(z,h_{i_0})(p)-(z,h_{i_0})(q)|>0$. This shows that  
\begin{equation}\label{eq:zhinjective}
	(z,h)\colon  M\to \r^4 \quad\text{is injective}.
\end{equation}
On the other hand, conditions {\rm (c$_j$)}, $j>i$, guarantee that 
\[
	|h(p)|\ge r_{i-1}\quad \text{for all 
	$p\in z^{-1}(r_{i-1}\overline\d)\cap K_i\setminus K_{i-1}$,}
\]
which implies that $|(z,h)(p)|\ge r_{i-1}$ for all $p\in K_i\setminus \mathring K_{i-1}$ and $i\in\n$. Since $\lim_{i\to\infty}r_i=+\infty$ and the sets $K_i$ are all compact, this ensures that $(z,h)\colon M=\bigcup_{i\in \n}K_i\to \r^2$ is a proper map.
This, \eqref{eq:zhimmersion}, and \eqref{eq:zhinjective} imply that $(z,h)\colon M\to \r^4$ is a proper embedding. 

This concludes the proof of Theorem \ref{th:h} under the assumption that Lemma \ref{le:fun} holds true. 
 
 %%%
 %%% Proof of the lemma
 %%%
 \section{Proof of Lemma \ref{le:fun}}\label{sec:proof-lemma}
\noindent 
The proof consists of two main stages. In the first one we approximate $h_1$ on $R$ by a function $\tilde h_1\in\Re\Ocal(S)$ such that $(z,\tilde h_1)$ is injective on a certain subset of $S\setminus R$. We shall first describe a $z$-saturated Runge compact set $\hat K_1$ in $M$ with $R\subset \hat K_1\subset S$ (see \eqref{eq:K1}). We shall then suitably extend $h_1$ to $\hat K_1$ as a function of class $\Re\Acal(\hat K_1)$ and, finally, we shall make use of Theorem \ref{th:Arakelyan} to obtain the function $\tilde h_1$. In the second stage of the proof we approximate $h_2$ by a function $\tilde h_2\in\Re\Ocal(S)$ such that $(z,\tilde h_2)$ is injective on the complement of that set in $S\setminus R$, by using a similar procedure. So, each main stage in the proof consists of three different steps. The construction will ensure that $\tilde h=(\tilde h_1,\tilde h_2)$ satisfies the conclusion of the lemma.
 
We begin with some preparations. The assumptions in the lemma imply that ${\rm Crit}(z)\cap (\bb S\cup\bb R)=\varnothing$ and $z|_{\mathring S}\colon \mathring S\to s\d$ is a branched covering. Choose a finite set $\hgot\subset s\S^1$ and fix $0<\epsilon<1$.
Define 
\[
	B_0={\rm Crit}(z)\cap \mathring S\setminus R,\quad A=z(B_0),\quad \text{and}\quad B=z^{-1}(A).
\] 
These sets may be empty. Since $z$ is simple, the set $z^{-1}(a)\cap B_0$ consists of a single point for all $a\in A$. Since $S=z^{-1}(s\overline\d)$ and $R=z^{-1}(r\overline\d)$, we have
\begin{equation}\label{eq:B0B}
 	B_0\subset B\subset \mathring S\setminus R 
 	\quad \text{and}\quad  
	A \subset s\d\setminus r \overline \d.
\end{equation}

%
% Step 1
%
\subsubsection*{Step 1: The first Runge compact set} For $j=1,2$, define
\begin{equation}\label{eq:Cjde}
 	[ h_j ] =\{p\in R \colon \exists\, q\in R \setminus  \{p\} 
	\text{ such that }(z,h_j)(p)=(z,h_j)(q)\}.
\end{equation}
The set $[h_j]$ is real analytic by harmonicity of $h$, and it could be empty. We have that $z([h_j])\cap r\s^1=\hgot _j$, and hence condition {\rm (II)} in the statement of the lemma implies that $\dim_\r([h_j])\leq 1$ for $j=1,2$,
\begin{equation}\label{eq:zC1zC2}
	z([h_1])\cap z([h_2])\cap r \s^1=\varnothing,\quad \text{and}\quad
	([h_1]\cup[h_2])\cap \bb R\; \text{ is finite}.
\end{equation}
Moreover, up to replacing $r$ by a slightly larger number if necessary, we can assume in addition that
\begin{equation} \label{eq:Cjtr}
	\text{$[h_j]$ is transverse to $\bb  R$, $j=1,2$.}
\end{equation}
Fix  a finite set $\varnothing \neq A_0\subset r\s^1$ such that 
\begin{equation} \label{eq:A0}
z([h_2])\cap r\s^1=z([h_2] \cap \bb  R)\subset A_0\subset r \s^1\setminus z([h_1])
\end{equation}
and
 \begin{equation}\label{eq:A0zC1}
\text{every component of $r \s^1\setminus A_0$ contains at most one point of $z([h_1])$;}
\end{equation}
see \eqref{eq:zC1zC2} and \eqref{eq:Cjtr}. Note that $A\cap A_0=\varnothing$ by \eqref{eq:B0B}. Set
\[
	A^*= A\cup A_0\subset s\d\setminus r \d.
\]  
Take a family $\{\Gamma_a\colon a\in A^*\}$ of pairwise disjoint segments of the form 
\[
	\Gamma_a=[a,e_a]\subset s\overline\d\setminus (r \d\cup \hgot),\quad e_a\in s\s^1,
\] 
such that $\Gamma_a$ is transverse to $r\s^1$ for all $a\in A_0$. It turns out that $\Gamma_a\cap (r\s^1\cup s \s^1)= \{a, e_a\}$ for all $a\in A_0$, $\Gamma_a\cap (r\s^1\cup s \s^1)= \{e_a\}$ for all $a\in A$, and  $\Gamma_a$ is transverse to $s\s^1$  for all   $a\in A^*$. 
Call 
\[
	\Gamma_{A_0}=\bigcup_{a\in A_0} \Gamma_a,\quad \Gamma_A=\bigcup_{a\in A} \Gamma_a, \quad \text{and}\quad \Gamma=\Gamma_{A_0}\cup \Gamma_A.
\]
We may and shall choose $A_0$ and $\{e_a\colon a\in A^*\}$ so that 
every connected component of $s \overline \d\setminus (r \d \cup \Gamma_{A_0})$ contains at most one point of $A$ and at most one point of $\hgot$.
See Figure \ref{fig:A-A0}.
\begin{figure}[ht]
\begin{minipage}[b]{0.45\linewidth}
\centering
	\includegraphics[width=\textwidth]{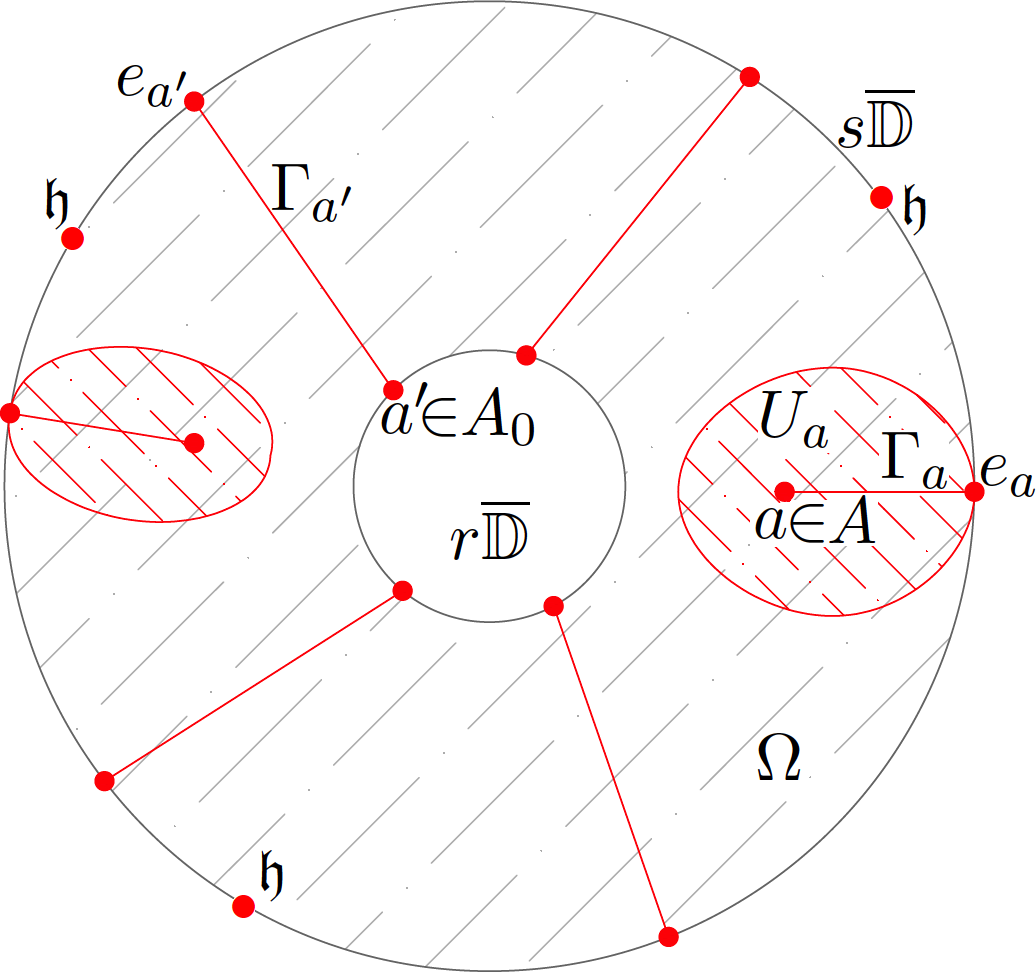}
 \end{minipage} 
   \hspace{5mm}
     \begin{minipage}[b]{0.4\textwidth}
 \centering
	\includegraphics[width=\textwidth]{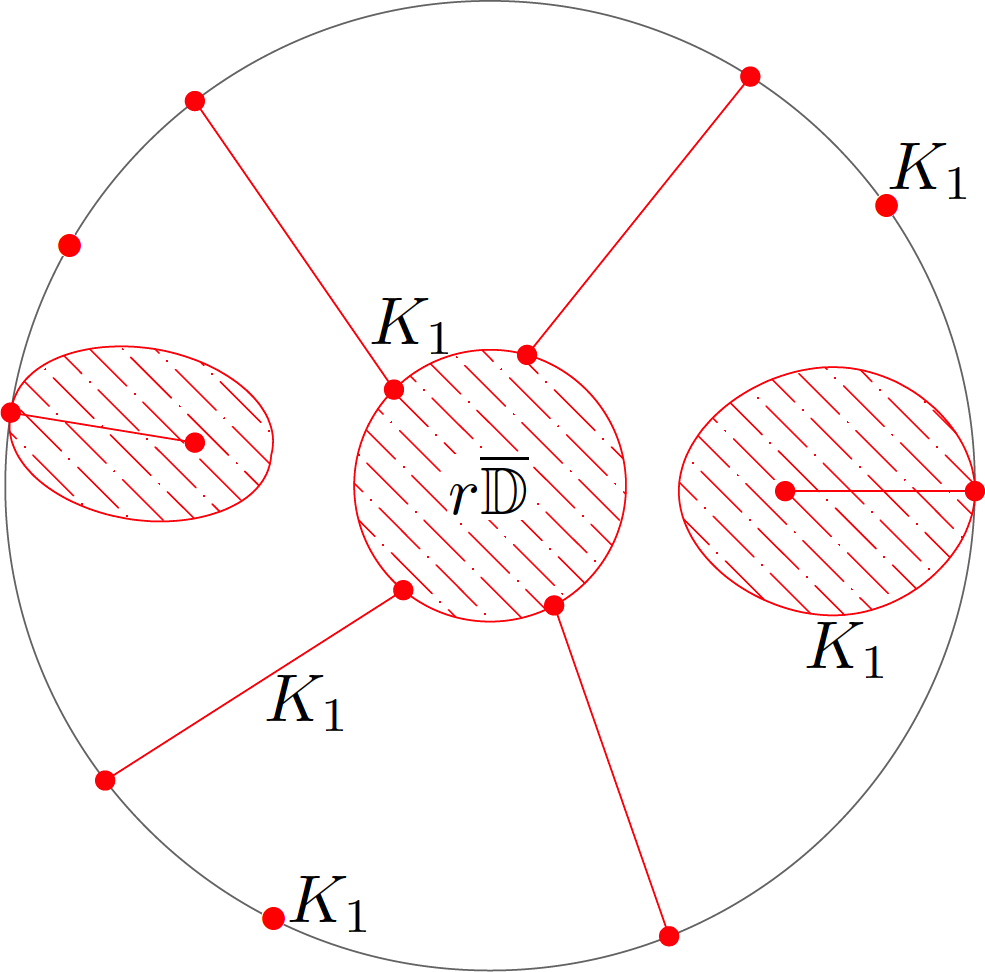}
 \end{minipage} 
	\caption{Left: The sets $\Gamma_a$ and $U_a$. Right: The set $K_1$.}\label{fig:A-A0}
\end{figure} 
Note that each connected component of 
$s\overline \d \setminus (r  \d \cup \Gamma)$ is  simply connected. 
Moreover, the restricted function
\[
	z|_{S\setminus (\mathring R\cup B)}\colon S\setminus (\mathring R\cup B)\to  s\overline \d \setminus (r  \d \cup A)
\]
(see \eqref{eq:B0B})
is an unbranched finite covering of degree ${\rm d}_{z|_{\mathring S}}\geq 1$.
Therefore, for any component $\Omega$ of  
$s\overline \d \setminus (r  \d \cup \Gamma)\subset s\overline \d \setminus (r  \d \cup A)$ the set $z^{-1}(\Omega)\subset S\setminus (\mathring R\cup B)$ has  ${\rm d}_{z|_{\mathring S}}$ connected components, and for any component $\hat \Omega$ of  $z^{-1}(\Omega)$ we have that
$z|_{\hat \Omega}\colon \hat \Omega \to \Omega$ is a biholomorphism.
Furthermore, \eqref{eq:zC1zC2} and \eqref{eq:A0} ensure that $\bb  R \cap z^{-1}(\Omega)\cap [h_2]=\varnothing$, and hence if $\hat \Omega$ and $\hat \Omega'$ are different components of  $z^{-1}(\Omega)$ then the function
\[
	h_2\circ (z|_{\bb  R \cap \hat \Omega'})^{-1}- h_2\circ (z|_{\bb  R \cap \hat \Omega})^{-1}\colon r \s^1\cap \Omega\to \r
\]
vanishes nowhere. Since $r \s^1\cap \Omega$ is connected, the binary relation given by
 \begin{equation}\label{eq:OmOm'}
 	\hat \Omega'< \hat \Omega \quad\text{if and only if}\quad 
	h_2\circ (z|_{\bb  R \cap \hat \Omega'})^{-1}< h_2\circ (z|_{\bb  R \cap \hat \Omega})^{-1}
 \end{equation}
 determines a total order in the set of components of $z^{-1}(\Omega)$. 
 
Next, we also choose a family of pairwise disjoint analytic closed discs  $\{U_a\subset \c\colon a\in A\}$ such that $U_a\subset s\overline \d\setminus (r \overline \d\cup \Gamma_{A_0})$, $U_a\cap s\s^1=\{e_a\}$, $\Gamma_a\setminus \{e_a\}\subset \mathring U_a$ (hence $\Gamma_a\subset U_a$), and 
\begin{equation}\label{eq:Ua}
	\text{$U_a$ is invariant under the reflection about the line containing $\Gamma_a$,}
\end{equation}
for all $a\in A$. See Figure \ref{fig:A-A0}. Set
\[
  	U_A=\bigcup_{a\in A} U_a\quad \text{and} 
	\quad \hat U_A=z^{-1}(U_A)\subset S.
\]
Likewise, for $a\in A^*$ we let $\hat\Gamma_a=z^{-1}(\Gamma_a)\subset S$,
\begin{equation}\label{eq:hatGammaA_0}
	\hat\Gamma_{A_0}=\bigcup_{a\in A_0} \hat\Gamma_a, \quad 
	\hat\Gamma_A=\bigcup_{a\in A} \hat\Gamma_a, \quad \text{and}\quad \hat\Gamma=\hat\Gamma_{A_0}\cup \hat\Gamma_A.
\end{equation}
Finally, define $\hat\hgot =z^{-1}(\hgot)\subset\bb S$,
\begin{equation}\label{eq:K1}
K_1=r \overline\d \cup \Gamma_{A_0}\cup U_A\cup \hgot, \; \text{ and } \; \hat K_1:=z^{-1}(K_1)=R\cup \hat \Gamma_{A_0}\cup \hat U_A\cup\hat\hgot .
\end{equation}
See Figure \ref{fig:A-A0}.
Note that $\hat K_1$ is a Runge compact set in $M$, $\hat U_A\cap\hat \hgot=\varnothing$, $(R\cup \hat \Gamma_{A_0})\cap (\hat U_A\cup\hat\hgot )=\varnothing$,  and $z(\hat K_1)=K_1$.  The set $\hat K_1$ induces a sort of topological puzzle on $S\setminus \mathring R$ which allows to describe the covering $z|_{S\setminus \mathring R}\colon S\setminus \mathring R\to s\overline\d\setminus r\d$ by means of topological surgery. We explain the details.

Fix $b\in B=z^{-1}(A)$ and write $a=z(b)\in A$. Denote by $\hat \Gamma^b\subset \hat \Gamma_a$ the component of $\hat\Gamma_A$ containing $b$ and by $\hat U^b$ the component of $\hat U_A$ containing $b$ (in its interior); the latter is a closed disc. 

Assume that $b\in B\setminus B_0$ is noncritical for $z$. Write $\hat \Omega^b$    for the   component of $S\setminus (\mathring R\cup \hat \Gamma)$ whose closure (a compact disc) contains $\hat U^b$.  
In this case  
\begin{equation}\label{eq:zUbnoB0}
	z|_{\hat U^b}\colon \hat U^b\to U_a \quad \text{is a biholomorphism}.
\end{equation}
Suppose on the contrary that $b\in B_0$ is critical for $z$. Since $z$ is simple, there are two components $\hat \Omega^b_-$ and $\hat \Omega^b_+$ of $S\setminus (\mathring R\cup \hat \Gamma)$ whose closures (compact discs) contain $\hat \Gamma^b$. In fact, $\hat \Gamma^b=\overline{\hat \Omega^b_-}\cap \overline{\hat \Omega^b_+}$.  Setting $\hat \Omega^b:=\hat \Omega^b_-\cup \hat \Omega^b_+$, we have that $z(\hat \Omega^b)=z(\hat \Omega^b_-)=z(\hat \Omega^b_+)$ is a component of $s\overline \d \setminus (r  \d \cup \Gamma)$ and $z|_{\hat \Omega^b \cup \hat \Gamma^b}\colon \hat \Omega^b\cup\hat \Gamma^b\to  z(\hat \Omega^b)\cup  \Gamma_a$ is a branched covering of degree $2$ with the only (interior) branch point $b$; observe that $\hat U^b\subset   \hat \Omega^b\cup\hat \Gamma^b$. Let 
 \begin{equation}\label{eq:J^b}
 J^b\colon   \hat \Omega^b\cup\hat \Gamma^b\to   \hat \Omega^b \cup\hat \Gamma^b 
 \end{equation}
  denote the only holomorphic involution with $z\circ J^b=z$, and notice that
$J^b(\hat U^b)=\hat U^b$,   $J^b(\hat \Gamma^b)=\hat \Gamma^b$, $J^b(\hat \Omega^b_+)=\hat \Omega^b_-$, and $J^b(b)=b$. We assume, as we may up to relabeling, that 
\begin{equation}\label{Omega-Omega+}
	\hat \Omega^b_-<\hat \Omega^b_+
\end{equation}
in the sense of \eqref{eq:OmOm'}. The endpoints $e^b_1,e^b_2$ of the arc $\hat \Gamma^b\subset \hat U^b$ lie in  $\bb  \hat U^b\cap \bb S$ and satisfy  $\{e^b_1,e^b_2\}=\hat \Gamma^b\cap \bb  \hat U^b$ and $z(e^b_1)=z(e^b_2)=e_a$. In this case 
\begin{equation}\label{eq:zUbsiB0}
	\left\{\begin{array}{l}
	z|_{\hat U^b}\colon \hat U^b\to U_a 
	\text{ is a branched covering of degree $2$}\smallskip
	\\
	\text{with the only branch point $b$.}
	\end{array}\right.
\end{equation}

%
% Step 2
%
\subsubsection*{Step 2: Extending $h_1$ to $\hat K_1$}
Our next task is to suitably extend $h_1$ to $\hat K_1$ as a function in $\Re\Acal(\hat K_1)$. In particular, we shall ensure that $(z,h_1)$ is injective on a certain large subset of $\hat K_1\setminus R$. Again, fix $b\in B=z^{-1}(A)$ and write $a=z(b)\in A$. First, we consider the only biholomorphism 
\[
	\varphi_a\colon U_a\to   \overline \d
\] 
such that $\varphi_a(a)=0$ and $\varphi_a(e_a)= 1$.
Condition \eqref{eq:Ua} ensures that $\varphi_a(\Gamma_a)=[0,1]$. 
If $b\in B_0$ is critical, then we define 
$\psi^b\colon \hat U^b\to \overline \d$ as the only biholomorphic map satisfying
\begin{equation}\label{eq:psiOm+-}
	 (\psi^b)^2=\varphi_{a}\circ z|_{\hat U^b} \quad \text{and}\quad 
	 \psi^b( \hat U^b\cap \hat \Omega^b_+)= 
	 \{\zeta\in \overline \d\colon \Im(\zeta)>0\},
\end{equation}
see \eqref{eq:zUbsiB0}, hence $\psi^b( \hat U^b\cap \hat \Omega^b_-)=\{\zeta\in \overline \d\colon\Im(\zeta)<0\}$.
 It follows from \eqref{eq:Ua} and \eqref{eq:psiOm+-} that
$\psi^b\circ J^b=-\psi^b$, $\psi^b(b)=0$, $\psi^b(\hat \Gamma^b)=[-1,1]$, and  $\psi^b(\{e^b_1,e^b_2\})=\{-1,1\}$. See Figure \ref{fig:Psi}.
\begin{figure}[ht]
	\includegraphics[width=.67\textwidth]{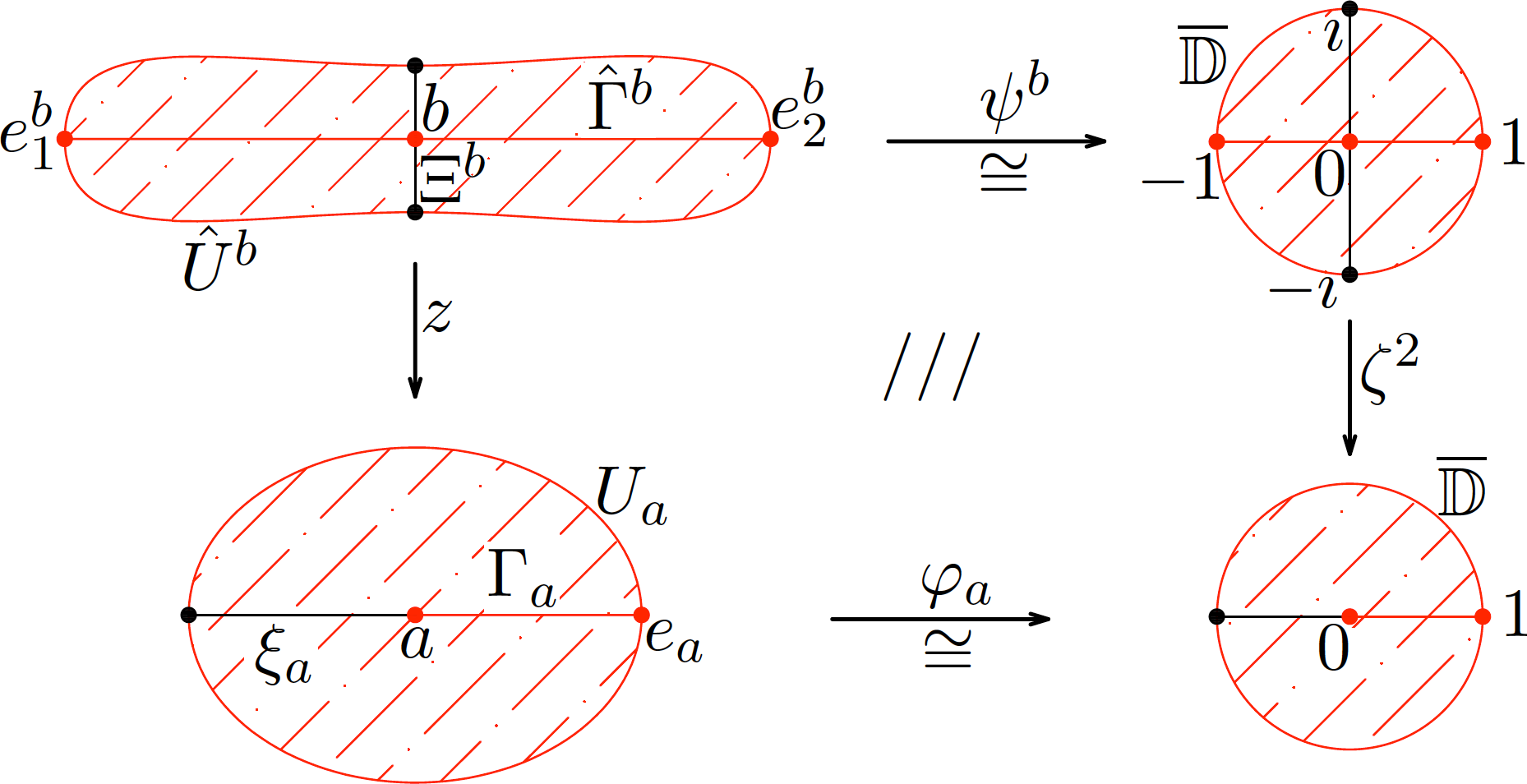}
	\caption{The biholomorphic map $\psi^b$, $b\in B_0$.}\label{fig:Psi}
\end{figure}

Choose an injective map $\lambda\colon B\setminus B_0\to (1,+\infty)$ and extend the given function $h_1\in\Re\Ocal(R)^2$ to any function $h_1\in \Re\Acal(\hat K_1)$ (see \eqref{eq:K1}) such that:
\begin{enumerate}[{\rm (A{1})}]
 \item $h_1|_{\hat\Gamma_{A_0}}$ is locally constant; see \eqref{eq:hatGammaA_0}. Thus, by \eqref{eq:Cjde} and   \eqref{eq:A0}, the map $(z,h_1)$ is injective on $\hat\Gamma_{A_0}$. 
 \smallskip
\item $h_1|_{\hat U^b}=\left\{
\begin{array}{ll}
 \displaystyle\Re(\psi^b) & \text{ for all }b\in B_0 \smallskip \\
  \lambda(b)  & \text{ for all }b\in B\setminus B_0.
  \end{array}
  \right.$
  \smallskip
\item $h_1|_{\hat \hgot}$ is injective.
 \end{enumerate}
Recall that $(R\cup \hat \Gamma_{A_0})\cap (\hat U_A\cup\hat \hgot)=\varnothing$ and $\hat U_A\cap\hat \hgot=\varnothing$. From now on, and with an obvious abuse of notation (cf.\ \eqref{eq:Cjde}), we redefine
\[%\begin{equation}\label{eq:Cjde-bis}
 	[ h_1 ] = \{p\in \hat K_1 \colon \exists\, q\in \hat K_1 \setminus  
	\{p\} \text{ such that }(z,h_1)(p)=(z,h_1)(q)\}.
\]%\end{equation}
Note that {\rm (B{1})}, {\rm (C{1})}, \eqref{eq:zUbnoB0},  \eqref{eq:psiOm+-}, and the fact that $\lambda\colon B\setminus B_0\to (1,+\infty)$ is injective guarantee that
\begin{equation}\label{eq:h1z}
	[h_1]\cap\hat\hgot=\varnothing,\quad [h_1]\cap \hat U_A = \bigcup_{b\in B_0}(\psi^{b})^{-1}([-\imath,\imath]\setminus\{0\})\subset\big(\bigcup_{b\in B_0}\hat U^b\big)\setminus B_0,
\end{equation}
where $\imath=\sqrt{-1}$. 
This, {\rm (A{1})}, and the facts that $K_1\cap s\s^1=\{e_a\colon a\in A^*\}\subset s\s^1\setminus \hgot$ and that $\varphi_a(e_a)=1$ for all $a\in A$ give that
\begin{equation}\label{eq:h1zea1}
	[h_1]\cap \bb S=\varnothing.
\end{equation}

%
% Step 3
%
\subsubsection*{Step 3: The function $\tilde h_1$}
For any tubular neighborhood  $W$ of $K_1\setminus \hgot$ in $s \overline \d$, we let $W_a$ be the component of $W$ containing $a\in A$, $W_A=\bigcup_{a\in A}W_a$, and $W_1=W\setminus W_A$. We set  $\hat W=z^{-1}(W)$,  denote by  $\hat W^b$ the component of $\hat W$ containing $b\in B$, and call  $\hat W_A= z^{-1}(W_A)=\bigcup_{b\in B} \hat W^b$ and $\hat W_1=z^{-1}(W_1)$. Note that $J^b(\hat W^b)=\hat W^b$ for all $b\in B_0$; see \eqref{eq:J^b}.  See Figure \ref{fig:W}.
\begin{figure}[ht]
	\includegraphics[width=.47\textwidth]{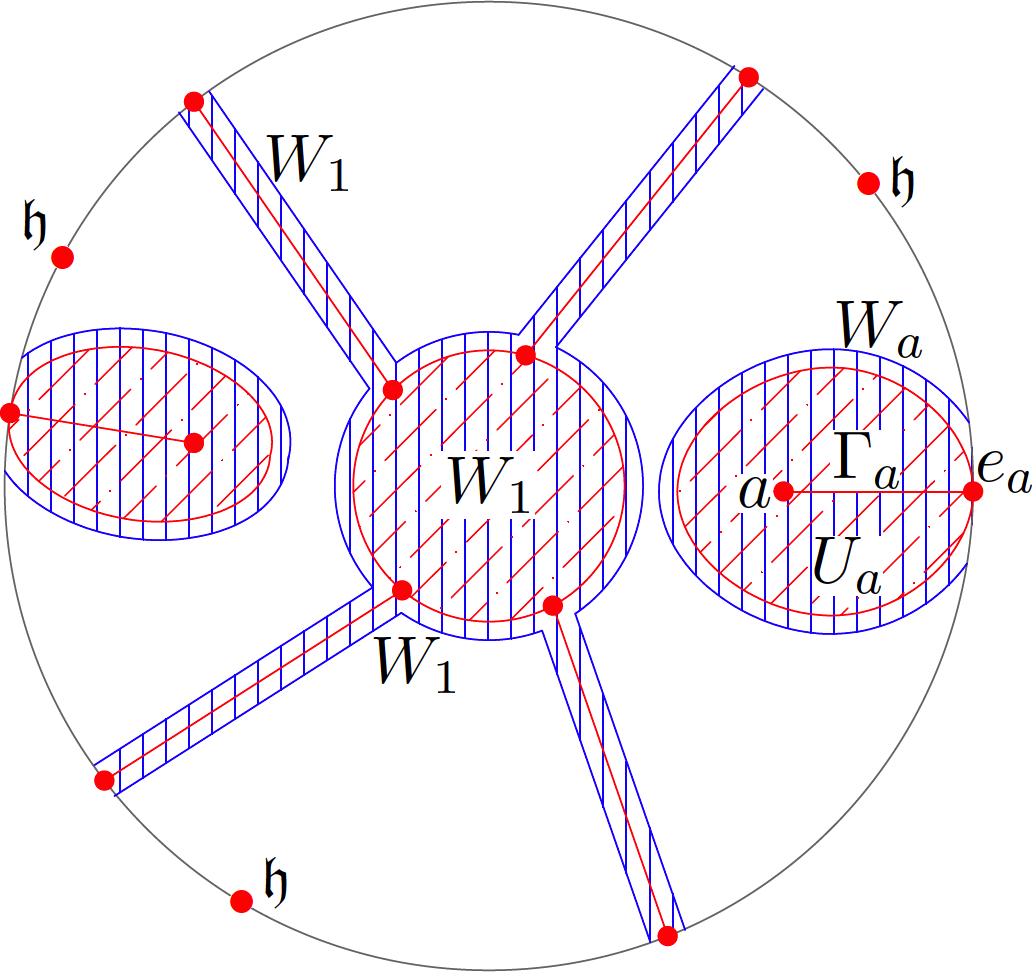}
	\caption{The set $W$.}\label{fig:W}
\end{figure}

Since $\hat K_1\subset M$ is Runge, Theorem \ref{th:Arakelyan} enables us to approximate $h_1$ by a function $\tilde h_1 \in\Re\Ocal(M)$ such that
\begin{equation}\label{eq:tildeh1-interpola}
	\text{$\tilde h_1-h_1$ vanishes to order $1$ everywhere in ${\rm Crit}(z|_S)\cup ([h_1]\cap \bb  R)$};
\end{equation} 
note that ${\rm Crit}(z|_S)\subset \hat K_1$.
Moreover, by {\rm (A{1})}, {\rm (B{1})}, {\rm (C1)}, \eqref{eq:h1z}, \eqref{eq:h1zea1}, and \eqref{eq:tildeh1-interpola}, and taking $\tilde h_1$ sufficiently close to $h_1$ on $\hat K_1$, we may assume that there is a smoothly bounded tubular neighborhood $W$ of $K_1\setminus \hgot$ in $s \overline \d$ satisfying $W\cap\hgot =\varnothing$ and the following conditions:
\begin{enumerate}[{\rm (a{1})}]
\item Defining $[\tilde h_1]=\{p\in S \colon \exists\, q\in S  \setminus  \{p\}\; 
\text{such that }(z,\tilde h_1)(p)=(z,\tilde h_1)(q)\}$, the set $[\tilde h_1]\cup B_0$
 is $1$-dimensional real analytic. Define
\[
	\Xi=([\tilde h_1]\cup B_0)\cap  \hat W_A .
\]

\smallskip
\item $[\tilde h_1]$ is  transverse to $\bb  R$ and   $[\tilde h_1]\cap \bb  R=[h_1]\cap \bb  R$; see \eqref{eq:zC1zC2}, \eqref{eq:Cjtr},  and \eqref{eq:tildeh1-interpola}.

\smallskip
\item $[\tilde h_1]\cap (\hat\Gamma_{A_0}\cup \hat\hgot )=\varnothing$. 

\smallskip
\item We have $\Xi =  \bigcup_{b\in B_0} \Xi^b$,
where for each $b\in B_0$ the set $\Xi^b$ is an analytic Jordan arc in  $\hat W^b$ meeting $\hat \Gamma^b$ only at $b$, having its endpoints in $\bb  W^b\setminus \bb S$,  being orthogonal to $\hat \Gamma^b$ at $b$ and transverse to $\bb \hat  U^b$, and meeting $\hat U^b$ in a Jordan arc; see \eqref{eq:h1z} and note that $\Xi^b=J^b(\Xi^b)$ is a slight deformation of an analytic extension of $(\psi^b)^{-1}([-\imath,\imath])$. See Figures \ref{fig:Psi} and \ref{fig:Y}.
\smallskip
\item $\big(\tilde h_1,\Im(\psi^b)\big)\colon \hat U^b\to  \r^2$ is a harmonic embedding for all $b\in B_0$; recall that $\psi^b\colon \hat U^b\to  \overline \d$ is biholomorphic.

\smallskip
\item $[\tilde h_1]\cap \hat W\cap \bb S=\varnothing$.  See \eqref{eq:h1zea1}.

\smallskip
\item $\|\tilde h_1-h_1\|_{\hat K_1}<\epsilon$.

\smallskip
\item $(z|_R,\tilde h_1|_R,h_2)\in \Ocal(R)\times\Re\Ocal(R)^2$ is an embedding. See {\rm (I)} in the lemma.
\end{enumerate} 

This concludes the construction of the function $\tilde h_1$. We now turn to the second stage of the proof: the construction of  the function $\tilde h_2\in\Re\Ocal(S)$. Again, we shall first describe a Runge compact set $\hat K_2$ in $S\setminus\mathring R$ (see \eqref{eq:K2}), extend $h_2$ to a suitable function in $\Re\Acal(\hat K_2)$, and apply uniform approximation. The set $\hat K_2$ and the function $h_2$ will be chosen so that  $[\tilde h_1]\subset \hat K_2$, $ (z, \tilde h_2)$ is injective on $[\tilde h_1]$, and $(z,\tilde h_1,\tilde h_2)$ is immersive on $B_0$.

%
% Step 4
%
\subsubsection*{Step 4: The second Runge compact set}
We define
\begin{equation}\label{eq:beta-a}
  	\text{$\xi_a=z(\Xi^b)$ for all $b\in B_0$ and $a=z(b)\in A$.}
\end{equation}
In view of {\rm (d1)}, it turns out that $\xi_a\subset W_a$ is an analytic Jordan arc containing $a$, otherwise disjoint from $\Gamma_a$ and tangent to $\Gamma_a$ at $a$, having endpoints $a$ and a point in $\bb W_a$, and being transverse to $\bb W_a$, $a\in A$. See Figures \ref{fig:Psi} and \ref{fig:Y}.  Set
\begin{equation}\label{alpha}
	  \xi:=z(\Xi)=\bigcup_{a\in A} \xi_a\subset W_A.
\end{equation}
 Let $Y$ be a region in $s\overline \d\setminus (r \d\cup \Gamma_{A_0}\cup U_A)$ satisfying the following conditions:
 \begin{enumerate}[11]
 \item[$\bullet$] $s\overline \d \setminus  \mathring W\subset Y$ and  $\bb W$ lies in the interior of $Y$ relative to $ s\overline \d$. So, $\hgot\subset Y$.
 
\smallskip 
\item[$\bullet$] For any component $\Omega$ of $s\overline \d\setminus (r \d\cup \Gamma_{A_0})$, $Y_\Omega:=\Omega \cap Y$ is a smooth closed disc such that $\bb  Y_\Omega \cap s\s^1\neq \varnothing$ and $Y_\Omega \cap r \s^1$ consists of a single point $y_\Omega$ such that $\Omega \cap z([\tilde h_1])\cap  r \s^1\subseteq \{y_\Omega\}$; see \eqref{eq:A0zC1} and {\rm (b1)}. In particular,     $\bb Y_\Omega$ and $r\s^1$ are tangent at $y_\Omega$. It follows from \eqref{eq:A0} and \eqref{eq:A0zC1} that
\begin{equation}\label{eq:h2yom}
	y_\Omega\in r \s^1\setminus A_0\subset r\s^1\setminus z([h_2]).
\end{equation}
For each component $\hat \Omega$ of $z^{-1}(\Omega)$, let $y_{\hat \Omega}\in \bb \hat \Omega$ be the only point
\begin{equation}\label{eq:yOmegahat}
	y_{\hat \Omega}\in z^{-1}(y_\Omega)\cap\hat \Omega.
\end{equation}

%\smallskip  
\item[$\bullet$]  $(z([\tilde h_1])\cup A) \setminus (r\d\cup Y) \subset \xi$; see {\rm (a1)}, {\rm (b1)}, {\rm (c1)}, and \eqref{alpha}.

\smallskip
\item[$\bullet$]  $\xi_a$ and $\bb Y$ meet transversally for all $a\in A$ and 
\[%\begin{equation}\label{eq:La}
	\varpi_a:=\xi_a \setminus (\mathring Y \cup \mathring U_a) \text{ is a Jordan arc};
\]%\end{equation}
see \eqref{eq:beta-a}. 
Define 
\[%\begin{equation}\label{eq:varpi-varpi-1}
	\varpi=\bigcup_{a\in A}\varpi_a=z([\tilde h_1]) \setminus (r\overline\d\cup \mathring Y\cup\mathring U_A) 
\]%\end{equation}
and
\begin{equation}\label{eq:varpi-varpi-2}
	\hat \varpi=z^{-1}(\varpi)=[\tilde h_1]\setminus  (R\cup \mathring{\hat Y}\cup \mathring{\hat U}_A).
\end{equation}
\end{enumerate}
See Figures \ref{fig:Psi}, \ref{fig:Y}, and \ref{fig:K2}.
\begin{figure}[ht]
\begin{minipage}[b]{0.27\linewidth}
\centering
	\includegraphics[width=\textwidth]{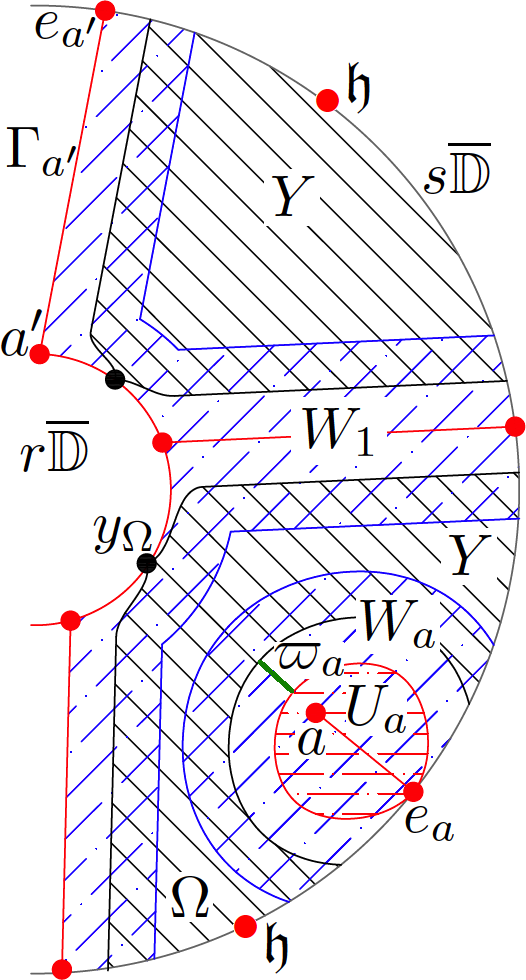}
 \end{minipage} 
 \hspace{10mm}
\begin{minipage}[b]{0.45\linewidth}
\centering
	\includegraphics[width=\textwidth]{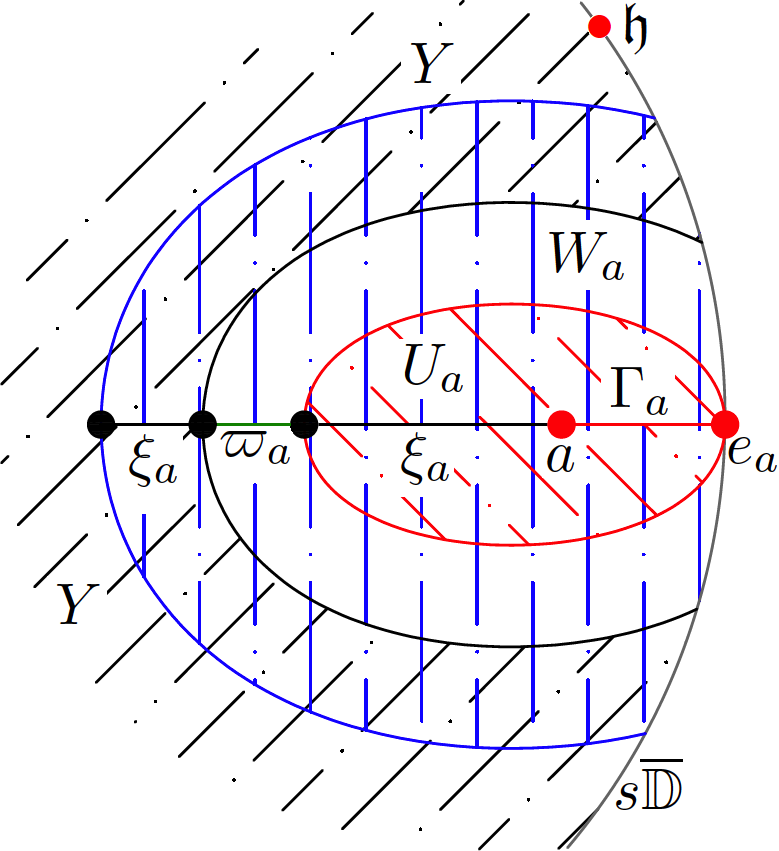}
 \end{minipage} 
	\caption{The sets $Y$ and $\varpi_a$.}\label{fig:Y}
\end{figure}
Define $\hat Y=z^{-1}(Y)$. For each $b\in B_0$ let 
\[
	\hat \varpi^b=z^{-1}(\varpi_a)\cap \Xi^b=z^{-1}(\varpi_a)\cap \hat W^b,\quad a=z(b).
\] 
Note that $\hat \varpi^b$ is the closure of $\Xi^b\setminus (\hat Y\cup \hat U^b)$. 
It turns out that $\hat \varpi^b$ consists of two  connected components of $\hat \varpi$, namely 
\begin{equation}\label{eq:Ombet+-}
	\hat \varpi^b_+\subset \hat \Omega^b_+\quad \text{and}\quad \hat \varpi^b_-=J^b(\hat \varpi^b_+)\subset \hat \Omega^b_-;
\end{equation}
see \eqref{Omega-Omega+}. 
Define
\begin{equation}\label{eq:K2}
	K_2= r\overline \d  \cup Y\cup \varpi \cup U_A \quad \text{and}\quad  \hat K_2= z^{-1}(K_2).
\end{equation}
See Figure \ref{fig:K2}.
\begin{figure}[ht]
\begin{minipage}[b]{0.56\linewidth}
\centering
	\includegraphics[width=\textwidth]{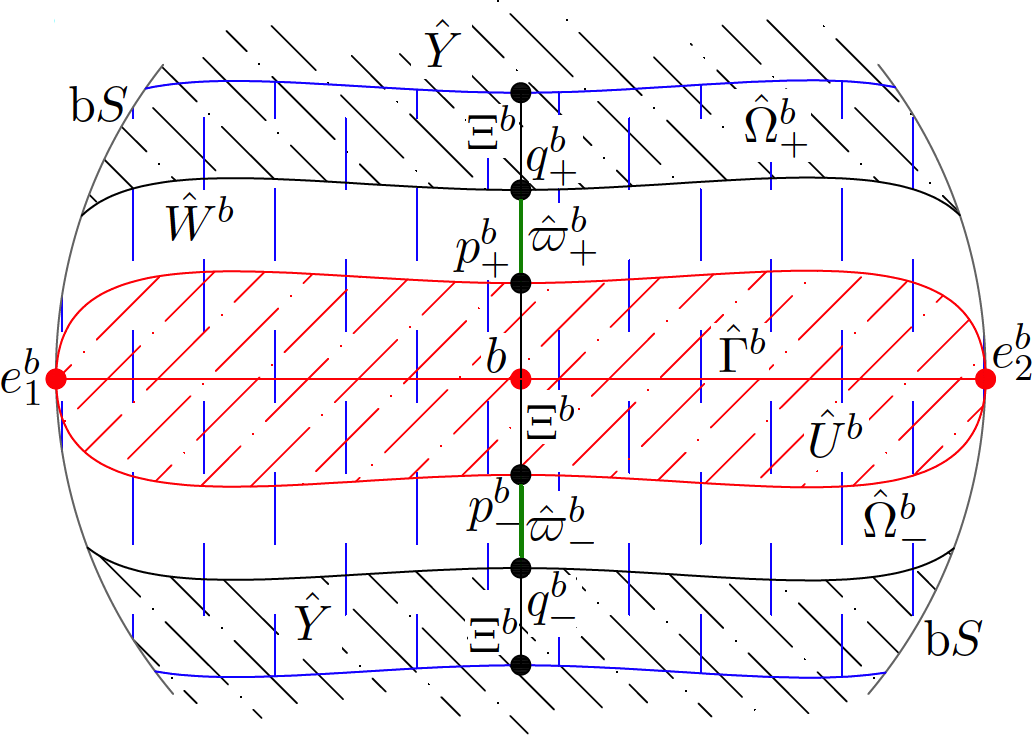}
 \end{minipage} 
   \hspace{1mm}
     \begin{minipage}[b]{0.41\textwidth}
 \centering
	\includegraphics[width=\textwidth]{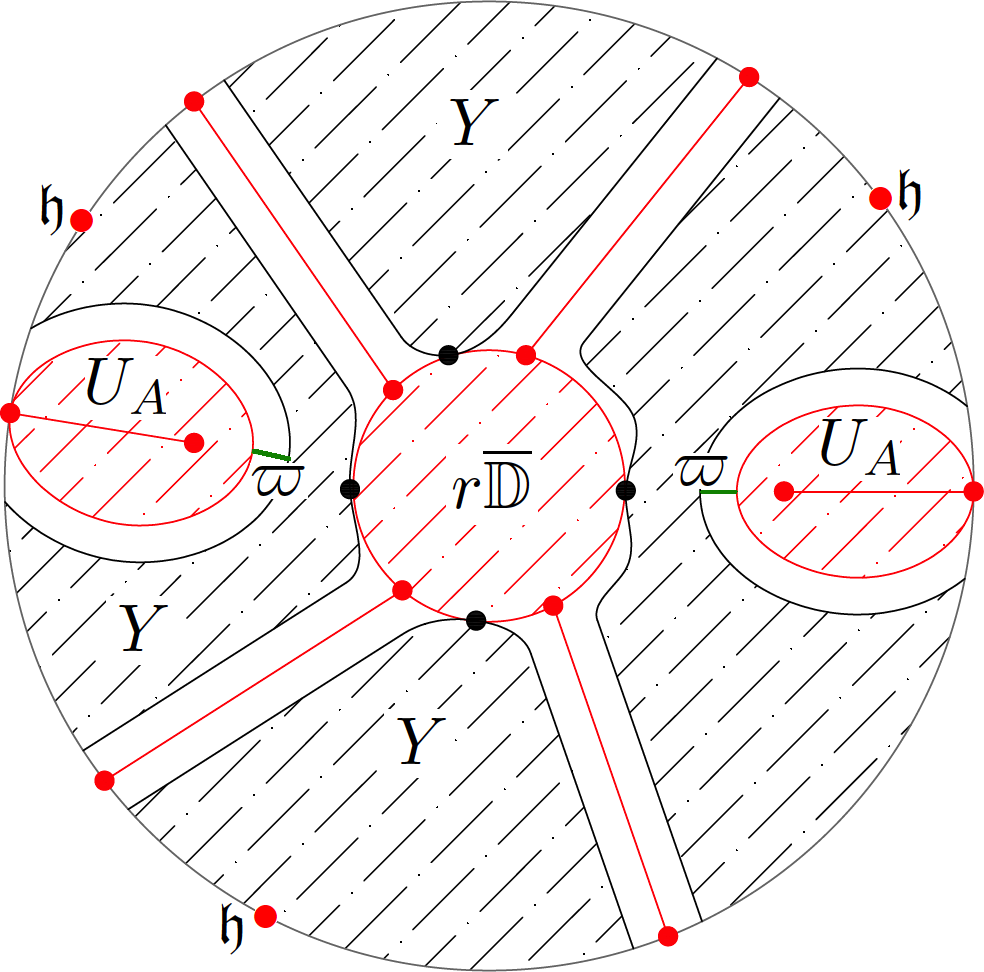}
 \end{minipage} 
	\caption{Left: The sets $\hat\varpi^b_+$ and $\hat\varpi^b_-$. Right: The set $K_2$.}\label{fig:K2}
\end{figure} 
Observe that $r\overline \d$ is a strong deformation retract of $K_2$, hence $K_2$ is Runge in $\c$ and $\hat K_2$ is Runge in $M$. By {\rm (a1)},  {\rm (d1)}, and \eqref{eq:varpi-varpi-2}, we have
\begin{equation}\label{eq:varpi}
	[\tilde h_1] \subset R\cup \hat Y\cup \hat U_A\cup\big(\bigcup_{b\in B_0}\hat \varpi^b\big) \subset \hat K_2.
\end{equation}
%
% Step 5
%
\subsubsection*{Step 5: Extending $h_2$ to $\hat K_2$}
We now extend $h_2$ to any function in $\Re\Acal(\hat K_2)$ satisfying the following conditions.
\begin{enumerate}[{\rm (A{2})}]
\item $h_2|_{\hat Y}$ is locally constant, and hence $(z,h_2)$ is injective on $\hat Y$; use \eqref{eq:Cjde} and  \eqref{eq:h2yom}. 
\smallskip
\item  $h_2|_{\hat U^b}=\Im(\Psi^b)$ for all $b\in B_0$, see \eqref{eq:psiOm+-}.
\smallskip
\item $(z,h_2)|_{\hat \varpi^b}\colon \hat \varpi^b\to \r^3$ is an embedding for all $b\in B_0$. 
\end{enumerate}
In order to construct such an extension of $h_2$ we proceed as follows. First, we extend $h_2$ to $R\cup \hat Y\cup (\bigcup_{b\in B_0}\hat U^b)$ as indicated in {\rm (A2)} and {\rm (B2)}, hence $h_2\in \Re\Acal(R\cup \hat Y\cup (\bigcup_{b\in B}\hat U^b))$. We then extend $h_2$ to any continuous function in $\hat K_2\setminus \bigcup_{b\in B_0}\hat \varpi^b$, so $h_2\in \Re\Acal(\hat K_2\setminus \bigcup_{b\in B_0}\hat \varpi^b)$. The crucial point is that we now may extend $h_2$ to $\hat\varpi^b=\hat \varpi^b_+\cup\varpi^b_-$, $b\in B_0$, so that $h_2$ is of class $\Re\Acal(\hat K_2)$ and satisfies {\rm (C{2})}.  Indeed, fix $b\in B_0$ and write $\{p^b_-\}=\hat \varpi^b_-\cap \hat U^b$ and $\{q^b_-\}=\hat \varpi^b_-\cap \hat Y$ for the endpoints of $\hat \varpi^b_-$, and  $\{p^b_+\}=\hat \varpi^b_+\cap \hat U^b$ and  $\{q^b_+\}=\hat \varpi^b_+\cap \hat Y$  for the ones of $\hat \varpi^b_+$. See Figure \ref{fig:K2}.
Conditions {\rm (A2)}, \eqref{eq:yOmegahat},  and \eqref{eq:Ombet+-} give that $h_2(q^b_-)=h_2(y_{\hat \Omega^b_-})$ and $h_2(q^b_+)=h_2(y_{\hat \Omega^b_+})$, and hence 
\[
	h_2(q^b_-)<h_2(q^b_+)
\] 
by \eqref{eq:OmOm'}, \eqref{Omega-Omega+}, and \eqref{eq:yOmegahat}.
Likewise,   \eqref{eq:psiOm+-}, \eqref{eq:Ombet+-}, and  {\rm (B{2})} ensure that
\[
	h_2(p^b_-)<0<h_2(p^b_+).
\] 
These two inequalities allows to continuously extend $h_2$ to $\hat\varpi^b=\hat \varpi^b_+\cup \varpi^b_-$ so that $h_2(p_-)<h_2(p_+)$ for all points $p_-\in \hat \varpi^b_-$ and $p_+\in \hat \varpi^b_+$ with $z(p_-)=z(p_+)$. This guarantees  {\rm (C{2})}.

%
% Step 6
%
\subsubsection*{Step 6: The function $\tilde h_2$}
Since $\hat K_2$ is Runge, Theorem \ref{th:Arakelyan} enables us to approximate $h_2$ uniformly on $\hat K_2$ by a function $\tilde h_2\in\Re\Ocal(M)$ such that
\begin{equation}\label{eq:emb-l}
	\text{$\tilde h_2-h_2$ vanishes to order $1$  at every point in ${\rm Crit}(z)\cap S$.}
\end{equation}

We claim that the map $\tilde h=(\tilde h_1,\tilde h_2)\in\Re\Ocal(M)^2$ satisfies the conclusion of Lemma \ref{le:fun} provided that $\tilde h_2$ is close enough to $h_2$ on $\hat K_2$. 
Indeed, note first that $(z,\tilde h)$ is an immersion by {\rm (h1)} (see the list of properties of $\tilde h_1$), the fact that $z$ is an immersion in $S\setminus (\mathring R\cup B_0)$, and conditions {\rm (B1)}, \eqref{eq:tildeh1-interpola}, {\rm (B2)}, and \eqref{eq:emb-l}. So, to check condition {\rm (i)} in the statement of the lemma, it suffices to see that $(z,\tilde h)\colon S\to\r^4$ is injective. By {\rm (h1)} and {\rm (A2)} we have that $(z,\tilde h)$ is injective on $R\cup\hat Y$. Likewise, {\rm (a1)}, {\rm (d1)}, {\rm (e1)}, and {\rm (B2)} give that $(z,\tilde h)$ is injective on $\hat U_A$. Since the sets $R$, $\hat Y$, and $\hat U_A$ are $z$-saturated, it remains to see that $(z,\tilde h)$ is injective on $S\setminus (R\cup\hat Y\cup \hat U_A)=z^{-1}(s\overline\d\setminus(r\overline\d\cup Y\cup U_A))$. In view of \eqref{eq:varpi}, it then suffices to check that  $(z,\tilde h)$ is injective on $\bigcup_{b\in B_0}\hat \varpi^b$, which follows from {\rm (C2)}. This shows {\rm (i)}.

Conditions {\rm (g{1})}, \eqref{eq:tildeh1-interpola}, and \eqref{eq:emb-l} give {\rm (ii)} provided that the approximation of $\tilde h_2$ to $h_2$ in $R$ is close enough. Finally, ${\rm (iii)}$ follows from {\rm (c1)}, {\rm (f1)}, {\rm (A2)}, and the facts that $\hgot\subset Y\setminus W$ and that $\tilde\hgot_j=z([h_j])\cap s\s^1\neq s\s^1$ is analytic, hence finite, for $j=1,2$. This concludes the proof of Lemma \ref{le:fun}.
 
 %%%
 %%% End of proof of the lemma
 %%%
 
This completes the proof of Theorem \ref{th:h}.
Theorem \ref{th:intro} is thus proved.

%%%%%%%%%%
%%%%%%%%%%
%%%%%%%%%%
%%%%%%%%%%   THANKS
%%%%%%%%%%
%%%%%%%%%%

\subsection*{Acknowledgements}
Research partially supported by the State Research Agency (AEI) via the grants no.\ PID2020-117868GB-I00  and PID2023-150727NB-I00, and the ``Maria de Maeztu'' Unit of Excellence IMAG, reference CEX2020-001105-M, funded by MICIU/AEI/10.13039/501100011033 and ERDF/EU; and the Junta de Andaluc\'ia grant no. P18-FR-4049; Spain.

The authors wish to thank Franc Forstneri\v c for helpful discussions.

%%%%%%%%%%
%%%%%%%%%%
%%%%%%%%%%
%%%%%%%%%%   THE BIBLIOGRAPHY
%%%%%%%%%%
%%%%%%%%%%

%{\bibliographystyle{abbrv} \bibliography{references}}

%%%%%%%%%%
%%%%%%%%%%
%%%%%%%%%%
%%%%%%%%%%   AFFILIATIONS
%%%%%%%%%%
%%%%%%%%%%

%\newpage
\medskip
\noindent Antonio Alarc\'{o}n, Francisco J. L\'opez
\newline
\noindent Departamento de Geometr\'{\i}a y Topolog\'{\i}a e Instituto de Matem\'aticas (IMAG), Universidad de Granada, Campus de Fuentenueva s/n, E--18071 Granada, Spain.
\newline
\noindent  e-mail: {\tt alarcon@ugr.es}, {\tt fjlopez@ugr.es}

\end{document}